\definecolor{brightpink}{rgb}{1.0, 0.0, 0.5}
\definecolor{fluorescentpink}{rgb}{1.0, 0.08, 0.58}
\definecolor{pembe}{rgb}{355, 0, 255}
\definecolor{gri}{rgb}{0.5, 0.5, 0.5}
\newtheorem{theorem}{Theorem}[section]
\newtheorem{lemma}[theorem]{Lemma}
\newtheorem{proposition}[theorem]{Proposition}
\newtheorem{corollary}[theorem]{Corollary}
\newtheorem{remark}[theorem]{Remark}
\newcommand{%
    \import{./images/}{.pdf_tex}
}[1]{%
    \import{./images/}{#1.pdf_tex}
}
\title{LEGENDRIAN NON-SIMPLE WHITEHEAD DOUBLES OF THE TREFOIL}
\author{SALİHA KIVANÇ}
\address{Department of Mathematics, Hacettepe University, 06800 Beytepe-Ankara, \MakeUppercase{Türkİye}}
\email{salihakivanc@hacettepe.edu.tr}
\thanks{The author was partially supported by the Scientific and Technological Research Council of Turkey (TÜBİTAK) under the program BİDEB 2211-A National PhD Scholarship Programme and 2214-A  International Research Fellowship Programme for PhD Students.}
\begin{document}
	
\begin{abstract}
 Ozsváth and Stipsicz  showed that  some Eliashberg–Chekanov twist knots, which are Whitehead doubles of the  unknot, are not Legendrian simple. 
 We extend their result by considering some  Whitehead doubles of the trefoil: Using properties of knot Floer homology and the distinguished surgery triangle, we show that this family of knots is  Legendrian non-simple in the standard contact 3-sphere. 
\end{abstract}
	
\maketitle

 \section{Introduction}
A Legendrian knot is a smooth embedding of a circle  $S^{1}$ into a contact 3-manifold  $(Y,\xi)$ such that at every point on the knot, its tangent vector lies within the contact plane at that point \cite{MR2397738}. This  definition leads to distinct geometric and algebraic properties, prompting the development of specialized numerical invariants to classify them. Among the most fundamental numerical invariants are the Thurston-Bennequin invariant $tb$, which measures how the  contact structure twists around the knot, and the rotation number $rot$, which quantifies the winding of the knot within the contact planes. A key classification problem for Legendrian knots is whether they are Legendrian simple, that is, they have a  unique Legendrian isotopy class with the same invariants $tb$ and $rot$ within their knot type. \\

The study of Legendrian simple knots has a rich history in contact topology. Early work aimed at establishing the existence of Legendrian non-simple knots faced profound challenges. A significant breakthrough was achieved through Eliashberg’s foundational work \cite{MR1215964}, and independently by Chekanov \cite{MR1946550}. Subsequent contributions to this area can be found in \cite{MR1959579, MR3085098}.  More recent investigations, such as  \cite{MR3990225} have explored Legendrian simplicity in the context of knot Floer homology, thereby providing new perspectives on the classification of Legendrian simple knots and enriching the broader landscape of Legendrian knot theory.\\

In this work, we study the Legendrian simplicity of the Whitehead doubles of the trefoil knot. This topic has attracted considerable  interest in recent works such as \cite{MR3085098} and \cite{ MR2650809}, owing to its deep connections with knot Floer homology and numerical invariants of Legendrian knots. The main theorem of this paper is as follows.

\begin{theorem} \label{main}    
For all $n \geq 1$, the positively clasped $(-n)$-twisted Whitehead double of the right-handed trefoil is not Legendrian simple.
More precisely, there exist at least $\lfloor \frac{n+3}{2} \rfloor$ pairwise non-Legendrian isotopic realizations of such a knot in the standard contact $3-$sphere $(S^3,\xi_{st})$ with Thurston-Bennequin invariant $1$ and rotation number $0$.

\end{theorem}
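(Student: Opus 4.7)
The plan is to extend the Ozsváth--Stipsicz strategy for Whitehead doubles of the unknot to the trefoil companion, combining the Legendrian satellite construction with the behavior of the Lisca--Ozsváth--Stipsicz--Szabó (LOSS) Legendrian invariant under the distinguished surgery triangle in knot Floer homology. Write $D_n$ for the positively clasped $(-n)$-twisted Whitehead double of the right-handed trefoil.

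First I would produce $\lfloor \frac{n+3}{2} \rfloor$ Legendrian realizations $L_0, \ldots, L_{\lfloor \frac{n+1}{2} \rfloor}$ of $D_n$ with $tb=1$ and $rot=0$ by applying Legendrian Whitehead patterns with varying stabilization data to the maximal Thurston--Bennequin Legendrian representatives of the right-handed trefoil (whose Legendrian classification is due to Etnyre--Honda). The classical invariants of these satellites follow from the standard formulas for $tb$ and $rot$ under the Legendrian satellite operation; requiring $(tb,rot)=(1,0)$ constrains the stabilization profiles of both companion and pattern, and a direct enumeration in terms of $n$ yields exactly $\lfloor \frac{n+3}{2} \rfloor$ candidates.

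Next, to each $L_i$ I would attach $\widehat{\mathfrak{L}}(L_i) \in \widehat{HFK}(-S^3, D_n)$. The knots $D_n$ and $D_{n-1}$ differ by a single crossing change in the twist region, so the oriented-resolution (distinguished) triangle in knot Floer homology relates $\widehat{HFK}(D_n)$, $\widehat{HFK}(D_{n-1})$, and the Floer homology of the unknotted resolution. I would propagate the $\widehat{\mathfrak{L}}$ classes inductively in $n$ through this triangle, using naturality of the LOSS invariant together with its standard stabilization behavior, to pin down each class up to its Alexander/Maslov bigrading. Representatives coming from different stabilization profiles of the companion produce LOSS classes at distinct Alexander gradings, because the Alexander grading of the satellite class is controlled by the rotation number of the companion trefoil. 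A grading calculation then shows that the $\lfloor \frac{n+3}{2} \rfloor$ non-vanishing classes lie in pairwise distinct bigradings of $\widehat{HFK}(-S^3, D_n)$, so the $L_i$ are pairwise non-Legendrian isotopic.

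The most delicate part of the argument is controlling the inductive step through the distinguished triangle: the connecting maps need not be injective on LOSS classes, so non-vanishing of $\widehat{\mathfrak{L}}(L_i)$ at each stage must be verified directly, using the known explicit structure of $HFK^-$ of the right-handed trefoil as a $U$-module. Guaranteeing that distinct $L_i$ survive the induction in distinct bigradings — and not merely that they do so at the base case — is where the computation becomes genuinely intricate, and is the principal technical hurdle I anticipate.
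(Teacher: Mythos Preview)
Your proposal contains a genuine gap at the distinguishing step. You assert that ``representatives coming from different stabilization profiles of the companion produce LOSS classes at distinct Alexander gradings'' in $\widehat{HFK}(-S^3,D_n)$. But this cannot be: by the Ozsv\'ath--Stipsicz grading formula (Theorem~\ref{thm:main} in the paper), the Alexander grading of $\widehat{\mathfrak{L}}(L)$ is $\tfrac{1}{2}(tb(L)-rot(L)+1)$ and the Maslov grading is then pinned down by $d_3(\xi_{st})=0$. Since all your $L_i$ are constructed to have $tb=1$ and $rot=0$, every $\widehat{\mathfrak{L}}(L_i)$ lands in the \emph{same} bigrading $(A,M)=(1,2)$. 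The rotation number of the companion trefoil is not visible in the bigrading of the satellite's LOSS class once you are back in $S^3$; only the classical invariants of the satellite itself enter. So bigrading alone cannot separate the classes, and your proposed grading calculation cannot conclude.

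The paper circumvents exactly this obstacle by passing through an auxiliary manifold with nontrivial $\text{Spin}^c$ set. One performs contact $(-1)$-surgery on the Legendrian companion trefoil $C(k,l)$ to obtain $(-Y',\xi(k,l))$, where $Y'$ is $(n+1)$-surgery on the left-handed trefoil and hence carries $n+1$ distinct $\text{Spin}^c$ structures. The different rotation numbers of $C(k,l)$ force the contact structures $\xi(k,l)$, and therefore the invariants $\widehat{\mathfrak{L}}(S'(k,l))$, into pairwise distinct $\text{Spin}^c$ summands of $\widehat{HFK}(-Y',S')$; this is where the companion's rotation number actually does the work. A \emph{single} distinguished surgery triangle (not an induction in $n$) then relates $(Y',S')$, $(S^3,D_n)$, and a third pair $(Y''',S''')$; the decisive lemma is that $S'''$ is an unknot in $Y'''$, so $\widehat{HFK}(Y''',S''',1)=0$ and the surgery map $\widehat{F}_K$ is injective in Alexander grading $1$. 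Injectivity transports the $\text{Spin}^c$-distinct, nonzero classes in $Y'$ to distinct classes in $S^3$. Your inductive scheme through $D_n\to D_{n-1}$ crossing-change triangles does not give access to this $\text{Spin}^c$ splitting, and without it there is no evident mechanism to tell the $\widehat{\mathfrak{L}}(L_i)$ apart inside a single bigraded summand.
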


\begin{figure}[ht!] 
    \def\svgwidth{.6\columnwidth}
    \centering
    %
\begingroup%
  \makeatletter%
  \providecommand\color[2][]{%
    \errmessage{(Inkscape) Color is used for the text in Inkscape, but the package 'color.sty' is not loaded}%
    \renewcommand\color[2][]{}%
  }%
  \providecommand\transparent[1]{%
    \errmessage{(Inkscape) Transparency is used (non-zero) for the text in Inkscape, but the package 'transparent.sty' is not loaded}%
    \renewcommand\transparent[1]{}%
  }%
  \providecommand\rotatebox[2]{#2}%
  \newcommand*\fsize{\dimexpr\f@size pt\relax}%
  \newcommand*\lineheight[1]{\fontsize{\fsize}{#1\fsize}\selectfont}%
  \ifx\svgwidth\undefined%
    \setlength{\unitlength}{708.66141732bp}%
    \ifx\svgscale\undefined%
      \relax%
    \else%
      \setlength{\unitlength}{\unitlength * \real{\svgscale}}%
    \fi%
  \else%
    \setlength{\unitlength}{\svgwidth}%
  \fi%
  \global\let\svgwidth\undefined%
  \global\let\svgscale\undefined%
  \makeatother%
  \begin{picture}(1,0.82799994)%
    \lineheight{1}%
    \setlength\tabcolsep{0pt}%
    \put(0,0){\includegraphics[width=\unitlength,page=1]{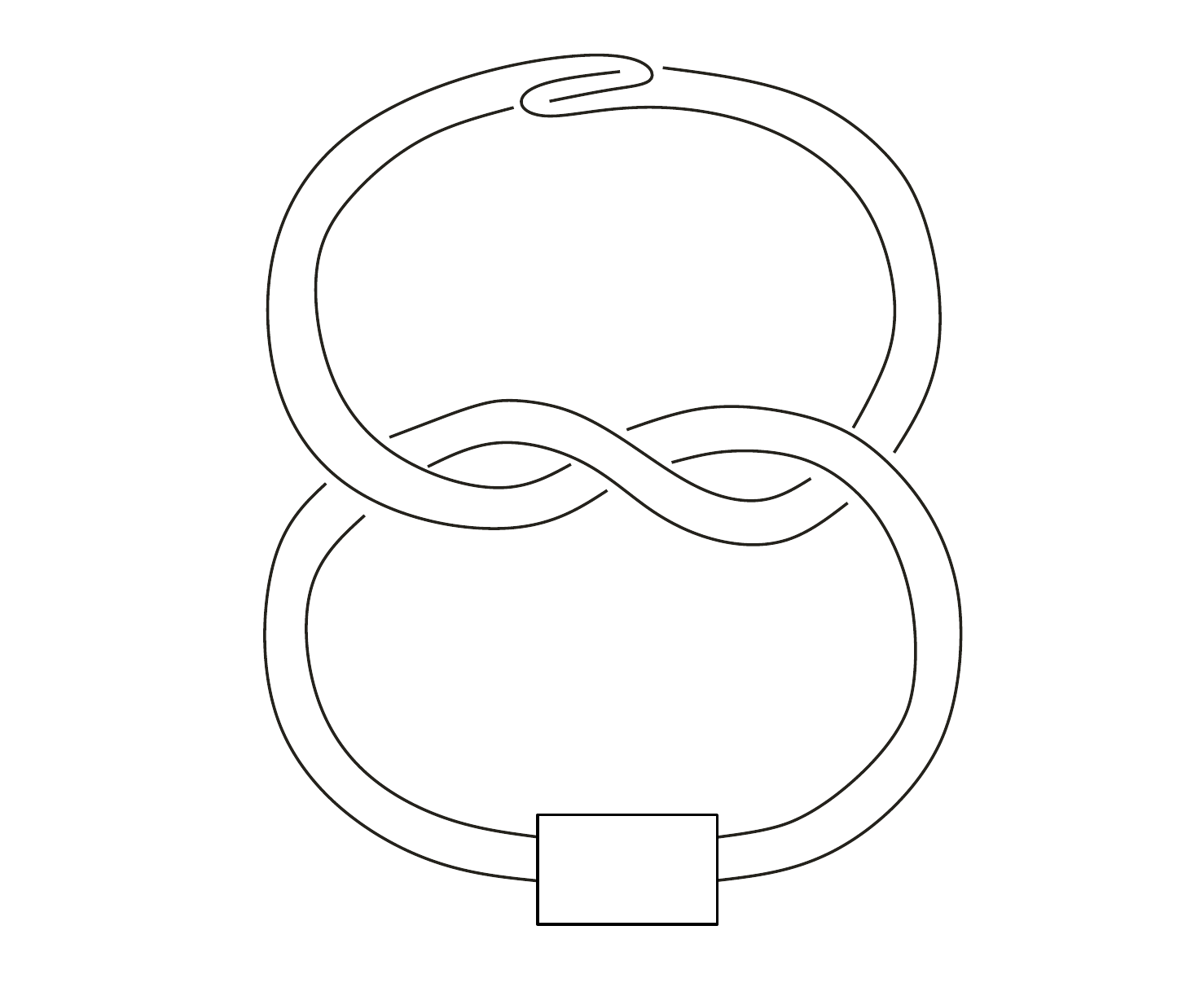}}%
    \put(0.46236893,0.09381){\color[rgb]{0,0,0}\makebox(0,0)[lt]{\lineheight{1.25}\smash{\begin{tabular}[t]{l}$-n-3$\end{tabular}}}}%
  \end{picture}%
\endgroup%

       \caption{The knot $Wh_{+}(S,-n)$, which is the positively clasped, $(-n)$-twisted Whitehead double of the right handed trefoil.
       Note that the box containing a $(-n-3)$ denotes  $n+3$ negative full twists. }
    \label{fig:st}
\end{figure} 

Our proof of Theorem 1.1 closely follows the methodology established in \cite{MR2650809}. The central idea is to distinguish various Legendrian realizations, denoted as $S(k,l)$, of the same underlying knot type $Wh_{+}(S,-n) \subset S^3$  (where $S$ is a trefoil), which also have the same classical invariants $tb$  and $rot$.  The core of our argument involves demonstrating that these $S(k,l)$ are indeed distinct, as evidenced by their Legendrian invariants $\widehat{\mathfrak{L}}(S(k,l))$ within the framework of knot Floer homology, specifically in its hat version $\widehat{HFK}(-S^3, Wh_{+}(S,-n))$. Knot Floer homology, introduced by Ozsváth and Szabó \cite{MR2065507} and independently by Rasmussen \cite{MR2704683}, is a refined invariant of knots and links embedded in $3-$manifolds, capturing fundamental topological information.  It can be obtained algebraically by defining a filtration on an appropriate chain complex of the Heegaard Floer homology $\widehat{HF}(Y)$ of the underlying 3-manifold. Geometrically, this construction involves adding an extra base point to the Heegaard diagram of the relevant 3-manifold. For each knot, the homology of the associated graded object provides a bigraded vector space endowed with Maslov and Alexander gradings.   Its foundation lies in symplectic topology drawing inspiration from Floer's work \cite{MR965228} on Lagrangian intersections and the study of holomorphic curves  in the symmetric products of Riemann surfaces. Knot Floer homology is also split  over relative $Spin^{c}$ structures  \cite{MR2113019}, and the invariants they encode provide topological information about the knot complement. \\

In our analysis, the hat version of Heegaard Floer homology, denoted as $\widehat{HFK}$, is a fundamental component. This version of the homology is characterized as a bigraded vector space. Its various properties,  including the $Spin^{c}$ decomposition, play a critical role in our investigations. Furthermore, we utilize two important invariants in Floer homology theories. First, the contact invariant denoted by $c(Y,\xi)$ is an element in $\widehat{HF}(-Y)$  associated with a given contact 3-manifold $(Y, \xi)$. Second, the Legendrian invariant $\widehat{\mathfrak{L}}$ is associated with a given Legendrian knot in a contact 3-manifold $(Y, \xi)$, and its gradings are computable by using $tb$ and $rot$ \cite{MR2650809}.   These invariants are  defined using a compatible open book decomposition \cite{MR2153455} and \cite{MR2557137}. The Legendrian invariant induces a homology class $\widehat{\mathfrak{L}}(Y, \xi, S) \in \widehat{HFK}(-Y, S)$, which is our main distinguishing tool. To achieve this, we identify the Legendrian knots $S(k,l)$ in the standard contact manifold $(S^3, \xi_{st})$ as a result of two specific contact surgeries and the application of the cancellation lemma in contact surgery theory.  The second surgery is performed on a contact manifold $(-Y', \xi(k,l))$ containing a Legendrian knot $S'(k,l)$.  Notably, all such  $S'(k,l)$ share the same underlying knot type $S' \subset Y'$. This surgery construction gives a map:

$$\widehat{F}_K: \widehat{HFK}(-Y', S') \longrightarrow \widehat{HFK}(-S^3,  Wh_{+}(S,-n))$$

This map $\widehat{F}_K$ sends the Legendrian invariant $\widehat{\mathfrak{L}}(S'(k,l))$ in the initial manifold to $\widehat{\mathfrak{L}}(S(k,l))$ in $S^3$.  Our approach builds on the surgery exact triangle in the hat version of knot Floer homology $\widehat{HFK}$. The key insight here is that the invariants $\widehat{\mathfrak{L}}(S'(k,l))$ are distinguished by being supported in different $\text{Spin}^{c}$ structures. $S^3$ has only one $\text{Spin}^{c}$ structure. However, using the properties of the surgery exact triangle in $\widehat{HFK}$, we can show that the map $\widehat{F}_K$ is injective within a specific Alexander grading.   This injectivity is the key to our argument, as it ensures that the initial distinctness of $\widehat{\mathfrak{L}}(S'(k,l))$ in different $\text{Spin}^{c}$, hence proving that the invariants $\widehat{\mathfrak{L}}(S(k,l))$ in $\widehat{HFK}(-S^3,  Wh_{+}(S,-n))$ remain distinct. \\

For clarity, we summarize the geometric meaning and relationships of the main variables used throughout this paper. The variable $n$ defines the family of twisted Whitehead doubles, where $n \in \mathbb{Z}$ indicates the number of twists in the Whitehead double knot construction. The specific family of positively clasped, twisted Whitehead doubles is referred to as $ Wh_{+}(S,-n)$. In the context of the Legendrian realizations $S(k,l)$ of the smooth knot $Wh_{+}(S,-n)$, the parameters $k$ and $l$ represent the number of additional positive crossings introduced on the left and right sides of a specific part of the Legendrian knot diagram, respectively, as detailed in Section \ref{sec:main}. Both $k$ and $l$ are non-negative odd integers with $k+l+2=2n+6$. The interplay between $n$, $k$, and $l$ dictates the precise Legendrian isotopy class being studied and allows for the construction of distinct Legendrian knots having the same classical invariants.\\

The paper is organized as follows: Section 2 provides the necessary preliminaries and notations. In Section 3, we present the proof of our main results.

\medskip \noindent {\em \textbf{Acknowledgments}.\/} I would like to express my sincere gratitude to my advisor András Stipsicz for his invaluable guidance and steady support throughout this research. I am also sincerely thankful to Marco Marengon,    Vikt\'oria F\"oldv\'ari for their insightful conversations and thoughtful contributions. 

\section{Preliminaries}
This section presents the foundations of knot Floer homology and the distinguished triangle of knots, outlining the essential definitions and constructions that allow for the calculation of Legendrian invariants of knots throughout the paper. \\

Let $K$ be a null-homologous knot in a $3-$manifold $Y$. Throughout this section, we use the notation $CFK(Y,K)$ and $HFK(Y,K)$ to denote either the hat or the minus version of knot Floer homology, unless otherwise specified.  The core idea of knot Floer homology involves the construction of a chain complex $CFK(Y, K)$ from geometric objects arising from a Heegaard diagram.  The simplest version $\widehat{HFK}(Y,K)$ is a finite dimensional bigraded vector space over $\mathbb{F}$, whereas $HFK^{-}(Y,K)$ is a finitely generated bigraded $\mathbb{F}[U]$-module. We will work with $\mathbb{F} = \mathbb{Z}/2\mathbb{Z}$ coefficients. Knot Floer homology is equipped with Maslov $d \in \mathbb{Q}$  grading and Alexander grading $s \in \mathbb{Z}$.

$$HFK(Y,K)=\bigoplus_{d \in \mathbb{Q}, \hspace{.1cm} s \in \mathbb{Z}} HFK_{d} (Y,K,s)$$

When considering a rational homology sphere $Y$, both the hat and minus versions of knot Floer homology can be decomposed over Maslov grading $M$ and $\text{Spin}^c$ structure $\mathfrak{t}$ as follows \cite{MR2113019}: 

$$HFK(Y,K)=\bigoplus_{d \in \mathbb{Q}, \hspace{.1cm} \mathfrak{t} \in Spin^{c}(Y,K)} HFK_{d} (Y,K,\mathfrak{t})$$

Here, $\text{Spin}^c(Y,K)$ denotes the set of \emph{relative} $Spin^{c}$ structures for $(Y,K)$, which is equivalent to $Spin^{c}$ structures on the manifold $Y_{0}(K)$ obtained by $0-$framed surgery along the knot $K$. For any chain complex element $x \in CFK(Y,K)$, let $\mathfrak{t}(x)$ denote the relative $\text{Spin}^c$ structure in $\text{Spin}^c(Y,K)$ to which $x$ contributes. For a chosen Seifert surface $F$ of the knot $K$ in $Y- \nu(K)$, let $\widehat{F}$ be the closed surface in $Y_0(K)$ obtained by capping off $F$. The Alexander grading of $x$ is then determined by the formula $A_{F}(x)=\frac{1}{2} \langle c_{1}(\mathfrak{t}(x)),[\widehat{F}] \rangle \in \mathbb{Q}$, where $c_{1}(\mathfrak{t})$ is the first Chern class of the $\text{Spin}^c$ structure $\mathfrak{t}$. For more detailed discussion of the relation between the elements of the chain complexes of knot Floer homologies and the relative $Spin^{c}$ structures, we refer the reader to \cite{MR2065507}. \\

For a more detailed discussion of the Legendrian invariant, we refer the reader to \cite{MR2650809} and \cite{MR2557137}. Moreover, the following theorem provides  the relations between the numerical invariants of a given Legendrian knot and the Maslov and Alexander gradings of its Legendrian invariant.

\begin{theorem} ({\cite[Theorem~1.6]{MR2650809}}) \label{thm:main}
   Let $L \subset (Y,\xi)$ be a null-homologous Legendrian knot in the contact 3–manifold $(Y,\xi)$. Suppose that $F$ is a Seifert surface for $L$. Then, the chain $\mathfrak{L}(L) \in CFK^{-}(-Y,L)$ is supported in Alexander grading
   $$2A_{F}(\mathfrak{L}(L))=tb(L)-rot_{F}(L)+1.$$
   If $c_{1}(\xi)$ is torsion, then the Maslov grading of $\mathfrak{L}(L)$ is determined by
   $$2A_{F}(\mathfrak{L}(L))-M(\mathfrak{L}(L))=d_{3}(\xi)$$
   where $d_{3}(\xi)$ is the 3–dimensional invariant of the 2–plane field underlying the contact structure $\xi$.
\end{theorem}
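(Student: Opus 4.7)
The plan is to follow the strategy of Lisca--Ozsv\'ath--Stipsicz--Szab\'o, exploiting the fact that $\mathfrak{L}(L)$ is defined via an open book decomposition. Concretely, I would begin by invoking Giroux's correspondence together with the Honda--Kazez--Mati\'c construction to produce an open book $(P, \phi)$ compatible with $(Y, \xi)$ in which $L$ sits on a page $P$ as a Legendrian realization of the given knot type with its page framing. From this open book, the standard recipe yields a doubly pointed Heegaard diagram $(\Sigma, \boldsymbol{\alpha}, \boldsymbol{\beta}, w, z)$ for $(-Y, L)$, and the Legendrian invariant $\mathfrak{L}(L)$ is represented by a distinguished generator $\mathbf{x}_{L}$, obtained from a specific basis of properly embedded arcs on the page that are disjoint from $L$, together with their small Hamiltonian perturbations across the knot.

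The Alexander grading identity $2A_F(\mathfrak{L}(L)) = tb(L) - rot_F(L) + 1$ I would establish by computing the relative $\text{Spin}^c$ structure $\mathfrak{t}(\mathbf{x}_L)$ combinatorially from this Heegaard diagram, then using $A_F(\mathbf{x}_L) = \tfrac{1}{2}\langle c_1(\mathfrak{t}(\mathbf{x}_L)), [\widehat{F}]\rangle$. Following the Ozsv\'ath--Szab\'o recipe, $c_1$ is computed from a nonvanishing vector field constructed from $\mathbf{x}_L$, which differs from the contact vector field only in a neighborhood of the generator. On the page, the contact framing along $L$ agrees with the page framing, which in turn differs from the Seifert framing by $tb(L)$; meanwhile, comparing a section of the contact planes along $L$ with a trivialization extending over $F$ records $rot_F(L)$. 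Plugging these two geometric identifications into the combinatorial Chern-class evaluation on $[\widehat{F}]$, and summing the contributions from the $0$-framed $2$-handle used to cap off $F$, produces the claimed formula.

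For the Maslov grading statement, I would leverage the compatibility of $\mathfrak{L}(L)$ with the contact invariant $c(\xi) \in \widehat{HF}(-Y)$. Under the map from $\widehat{HFK}(-Y, L)$ to $\widehat{HF}(-Y)$ obtained by forgetting the basepoint $z$, the class $\mathfrak{L}(L)$ is sent to $c(\xi)$, and this map preserves the Maslov grading. When $c_1(\xi)$ is torsion the absolute Maslov grading of $c(\xi)$ equals $-d_3(\xi)$, so $M(\mathfrak{L}(L)) = -d_3(\xi)$ up to the Alexander-grading shift inherent in passing from the absolute $\widehat{HFK}$ grading to the $\widehat{HF}$ grading; book-keeping this shift yields $2A_F(\mathfrak{L}(L)) - M(\mathfrak{L}(L)) = d_3(\xi)$.

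The hardest part is the accounting in the second step: aligning the combinatorial Chern-class computation on $\Sigma$ with the geometric quantities $tb(L)$ and $rot_F(L)$ requires careful tracking of several boundary contributions (the monodromy $\phi$, the choice of arc basis, and the framing twist along $L$), and any sign or normalization error propagates directly into both displayed formulas. Once this dictionary is pinned down, the Maslov identity follows almost formally from the known absolute grading of $c(\xi)$ in $\widehat{HF}(-Y)$ together with the behavior of gradings under basepoint removal.
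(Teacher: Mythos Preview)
The paper does not supply its own proof of this theorem: it is quoted verbatim as \cite[Theorem~1.6]{MR2650809} in the preliminaries section and used as a black box throughout. There is therefore no in-paper argument to compare your proposal against.

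That said, your sketch is a faithful outline of the original Ozsv\'ath--Stipsicz argument in \cite{MR2650809}: constructing the adapted open book, identifying the distinguished generator $\mathbf{x}_L$, and reading off the Alexander grading from the relative $\text{Spin}^c$ structure via the page-framing/Seifert-framing discrepancy. Your assessment that the delicate part is the combinatorial Chern-class bookkeeping on the Heegaard surface is accurate; the Maslov-grading statement does indeed follow more formally from the known grading of $c(\xi)$ once the Alexander grading is established. If you want to turn this into a complete proof rather than a plan, the step that needs the most care is justifying that the vector-field comparison near $\mathbf{x}_L$ contributes exactly $tb(L)-rot_F(L)+1$ to $\langle c_1,[\widehat{F}]\rangle$; in the original paper this is done by an explicit local computation on the page.
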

In particular, 3-dimensional invariant $d_{3}(\xi_{st})$ of  the standard contact structure on $S^3$ is equal to zero \cite{MR2114165}. 
\begin{remark}  \label{rk1}
Let $L$ be a Legendrian realization of a knot $K$ in the contact 3-manifold $(Y,\xi) $.   In computational settings, it is often more advantageous to use $\widehat{HFK}$. According to \cite{MR3838884}, there exists a map
 $$CFK^{-}(-Y,K) \xrightarrow{U=0}  \widehat{CFK}(-Y,K)$$
 which induces the following map on the homology  level
 $$HFK^{-}(-Y,K) \longrightarrow  \widehat{HFK}(-Y,K).$$
  This map preserves the Alexander grading and maps   the Legendrian invariant $\mathfrak{L}$ to  the element $\widehat{\mathfrak{L}}$.
\end{remark}

 \begin{remark} (\cite{MR2557137})  \label{rk} 
    Consider   the natural chain map $$CFK^{-}(-Y, K) \xrightarrow{U=1} \widehat{CF}(-Y)$$ which induces a map between the knot Floer and Heegaard Floer homologies $$HFK^{-}(-Y, K) \longrightarrow   \widehat{HF}(-Y).$$ 
    For a Legendrian realization $L \subset (Y,\xi)$ of $K$, it maps the Legendrian invariant $\mathfrak{L}(L)$  to the contact invariant $c(Y,\xi)$ of the contact structure $\xi$ on the 3-manifold $Y$.
\end{remark}

The following result is fundamental in contact surgery theory. It allows us, under specific conditions, to simplify contact structures by 'cancelling' certain surgeries.
\begin{proposition}  (Cancellation Lemma) \label{cancellation} \cite[Proposition~6.4.5]{MR2397738}
    Let $(Y^{'},\xi^{'})$ be the contact manifold obtained from $(Y,\xi)$ by contact $(-1)-$surgery along a Legendrian knot $K$ and contact $(+1)-$surgery along a Legendrian push-off $K^{'}$ of $K$. Then $(Y^{'},\xi^{'})$ is contactomorphic to $(Y,\xi)$.
\end{proposition}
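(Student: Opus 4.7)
The plan is to exhibit an explicit contactomorphism between $(Y',\xi')$ and $(Y,\xi)$ by localizing the problem to a standard neighborhood of $K \cup K'$ and then invoking the Weinstein handle model for contact surgeries. The first step is to apply the Legendrian neighborhood theorem: a small standard neighborhood $\nu(K)$ of the Legendrian knot $K$ in $(Y,\xi)$ is contactomorphic to a standard contact solid torus, and the Legendrian push-off $K'$ sits inside $\nu(K)$ as a parallel copy on a standard convex torus. Consequently, both surgeries are supported inside a slightly larger tubular neighborhood $N$ of $K \cup K'$, so it suffices to establish the cancellation inside this concrete local model.

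Next I would check that the cancellation holds topologically. Contact $(-1)$-surgery along $K$ is smooth surgery with framing $\mathrm{tb}(K) - 1$, while contact $(+1)$-surgery along $K'$ is smooth surgery with framing $\mathrm{tb}(K) + 1$. Since $K'$ is a Legendrian push-off, its linking number with $K$ equals $\mathrm{tb}(K)$. A direct Kirby-calculus computation in this local picture shows that after performing the $(-1)$-surgery on $K$, the push-off $K'$ becomes isotopic to a meridian of the surgery dual, and the $(+1)$-surgery on this meridian undoes the first handle attachment. Smoothly, the two handles form a geometrically cancelling pair of 2-handles in the cobordism from $Y$ to $Y'$.

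The contact upgrade of this cancellation is provided by the Weinstein handle picture: contact $(-1)$-surgery along a Legendrian knot corresponds to attaching a Weinstein $2$-handle to the symplectization, while contact $(+1)$-surgery corresponds to attaching the handle with reversed coorientation, i.e.\ turning the cobordism upside down so that the handle is seen as a $1$-handle. Since the two handles cancel smoothly and each carries the compatible Weinstein/anti-Weinstein structure coming from the standard local model around $K \cup K'$, their composition is Weinstein-isotopic to a trivial cobordism. Restricting to the outgoing boundary yields the desired contactomorphism $(Y',\xi') \cong (Y,\xi)$.

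The main obstacle is this last step: verifying that the smooth cancelling pair can actually be realized through a Weinstein isotopy that respects the contact structures on the intermediate levels, rather than merely matching the topology. One way to handle this is to work entirely inside the local model $N$, identify $N$ with a piece of $(\mathbb{R}^3,\xi_{st})$ via the neighborhood theorem, and then perform both surgeries explicitly so that the resulting contact manifold is manifestly contactomorphic to $N$ itself. Once this local identification is established, it extends by the identity over $Y \setminus N$, yielding the global contactomorphism.
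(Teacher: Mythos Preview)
The paper does not give its own proof of this proposition; it simply quotes the statement from Geiges's textbook \cite[Proposition~6.4.5]{MR2397738} and uses it as a black box. So there is no argument in the paper to compare your proposal against.

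As for the proposal itself, the overall architecture---localize to a standard neighborhood of $K\cup K'$, verify the topological cancellation, then upgrade to contact---is the standard route and is essentially what the Ding--Geiges proof in the cited reference does. Your first two paragraphs are fine. The third paragraph, however, is where the argument becomes shaky. Contact $(+1)$-surgery is \emph{not} the attachment of a Weinstein $2$-handle with reversed coorientation in any literal sense; Weinstein cobordisms go only one way, and contact $(+1)$-surgery can destroy fillability, so framing the cancellation as a ``Weinstein isotopy of a cancelling pair'' is not a well-defined manoeuvre. What actually makes the cancellation work is much more direct: after the $(-1)$-surgery on $K$, the push-off $K'$ is Legendrian isotopic to the core of the new solid torus, and contact $(+1)$-surgery along that core is, essentially by definition, the operation of removing the surgery torus and regluing the original one with its original contact structure. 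In other words, once you have set up the local model correctly (which you do in the first paragraph), the cancellation is close to tautological on the contact level, and no Weinstein-cobordism machinery is needed. Your final paragraph gestures in this direction, and carrying it out explicitly inside the local model is exactly how the cited proof proceeds.
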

\begin{figure}[ht!]
    \def\svgwidth{.6\columnwidth}
    \centering
    %
\begingroup%
  \makeatletter%
  \providecommand\color[2][]{%
    \errmessage{(Inkscape) Color is used for the text in Inkscape, but the package 'color.sty' is not loaded}%
    \renewcommand\color[2][]{}%
  }%
  \providecommand\transparent[1]{%
    \errmessage{(Inkscape) Transparency is used (non-zero) for the text in Inkscape, but the package 'transparent.sty' is not loaded}%
    \renewcommand\transparent[1]{}%
  }%
  \providecommand\rotatebox[2]{#2}%
  \newcommand*\fsize{\dimexpr\f@size pt\relax}%
  \newcommand*\lineheight[1]{\fontsize{\fsize}{#1\fsize}\selectfont}%
  \ifx\svgwidth\undefined%
    \setlength{\unitlength}{566.92913386bp}%
    \ifx\svgscale\undefined%
      \relax%
    \else%
      \setlength{\unitlength}{\unitlength * \real{\svgscale}}%
    \fi%
  \else%
    \setlength{\unitlength}{\svgwidth}%
  \fi%
  \global\let\svgwidth\undefined%
  \global\let\svgscale\undefined%
  \makeatother%
  \begin{picture}(1,0.44999996)%
    \lineheight{1}%
    \setlength\tabcolsep{0pt}%
    \put(-0.00030317,0.39982065){\color[rgb]{0,0,0}\makebox(0,0)[lt]{\lineheight{1.25}\smash{\begin{tabular}[t]{l}$\widehat{HFK}(Y^{'}, S^{'})$\end{tabular}}}}%
    \put(0.67327891,0.40170437){\color[rgb]{0,0,0}\makebox(0,0)[lt]{\lineheight{1.25}\smash{\begin{tabular}[t]{l}$\widehat{HFK}(Y^{''}, S^{''})$\end{tabular}}}}%
    \put(0.32671698,0.08335978){\color[rgb]{0,0,0}\makebox(0,0)[lt]{\lineheight{1.25}\smash{\begin{tabular}[t]{l}$\widehat{HFK}(Y^{'''}, S^{'''})$\end{tabular}}}}%
    \put(0.42099166,0.43748651){\color[rgb]{0,0,0}\makebox(0,0)[lt]{\lineheight{1.25}\smash{\begin{tabular}[t]{l}$\widehat{F}_{K}$\end{tabular}}}}%
    \put(0.64337008,0.20277226){\color[rgb]{0,0,0}\makebox(0,0)[lt]{\lineheight{1.25}\smash{\begin{tabular}[t]{l}$\widehat{F}_{K^{'}}$\end{tabular}}}}%
    \put(0.15186988,0.21526261){\color[rgb]{0,0,0}\makebox(0,0)[lt]{\lineheight{1.25}\smash{\begin{tabular}[t]{l}$\widehat{F}_{K^{''}}$\end{tabular}}}}%
    \put(0,0){\includegraphics[width=\unitlength,page=1]{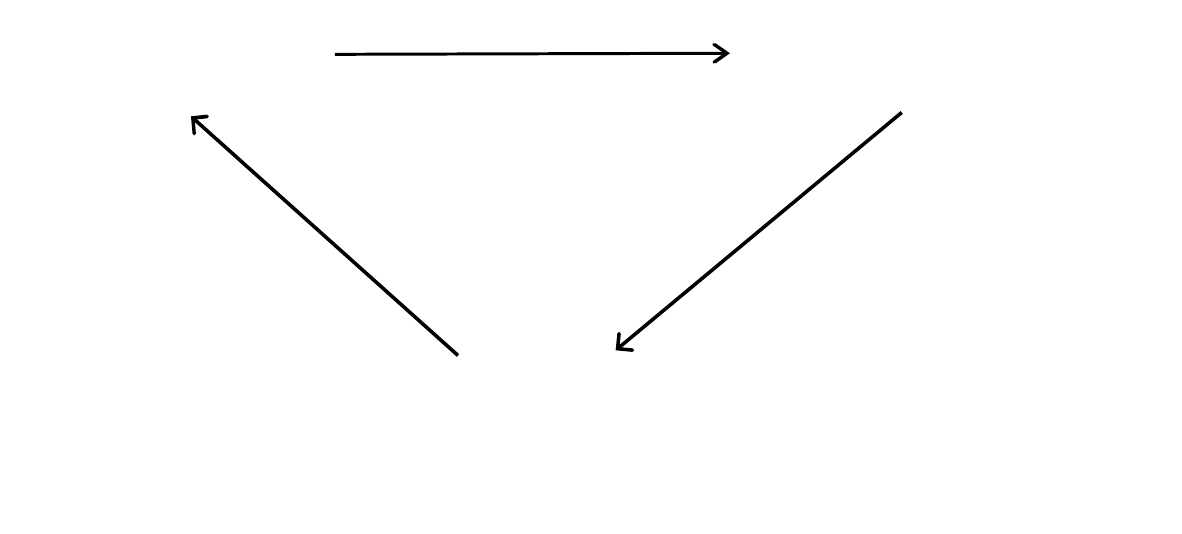}}%
  \end{picture}%
\endgroup%

    \caption{The distinguished surgery exact triangle on  knot Floer homology level}
    \label{triangle}
\end{figure}
To conclude, we recall the construction of the distinguished exact triangle involving knots and the induced maps on knot Floer homology. This exact triangle relates the knot Floer homologies of three manifolds obtained through a sequence of specific surgeries. Consider a link $L \cup \tilde{K} \subset S^{3}$. If surgery on the sublink $L$ with framing $\Lambda$ yields the manifold $Y'$, then performing surgery on $L \cup \tilde{K}$ with framing $\Lambda$ on $L$ and framings $\infty$, $0$, and $1$ on $\tilde{K}$ respectively. It yields the manifolds $Y'$, $Y''$, and $Y'''$. The image of $\tilde{K}$ in $Y'$ is precisely the knot we denote by $K$. \\

For clarity, we define the sequence of manifolds $Y'$, $Y''$, $Y'''$ and the relevant knots $K$, $K'$, $K''$ through a series of  surgeries on meridians. We begin with a $3-$manifold $Y'$ containing a knot $K$ with a chosen framing $f$. The manifold $Y''$ is obtained from $Y'$ by surgery along $K$ with framing $f$, denoted as $Y'' := Y'_{f}(K)$. The meridian of $K$, $\mu_{K} \subset Y'$, when given a framing $-1$, induces a framed knot $(K', f')$ in $Y''$. Likewise, the manifold $Y'''$ is obtained from $Y''$ by surgery along $K'$ with framing $f'$, denoted as $Y''' := Y''_{f'}(K')$. The meridian of $K'$, $\mu_{K'} \subset Y''$, with an induced framing $(-1)$, becomes a framed knot $(K'', f'')$ in $Y'''$. Lastly, surgery on $Y'''$ along $K''$ with framing $f''$ returns the original $3-$manifold $Y'$, i.e., $Y' = Y'''_{f''}(K'')$. Moreover, the meridian of $K''$, $\mu_{K''} \subset Y'''$, with an induced framing $(-1)$, induces the original framed knot $(K,f)$ in $Y'$. \\

The surgery maps associated with $K$, $K'$, and $K''$ (denoted $\widehat{F}_{K}$, $\widehat{F}_{K'}$, and $\widehat{F}_{K''}$, respectively) fit into an exact triangle on the Heegaard Floer homology level. This construction was further adapted in \cite{MR2650809} to a suitable version for knot Floer homology.   Figure \ref{triangle} illustrates this surgery exact triangle, showing the relevant knots $S'$, $S''$, $S'''$ within their respective surgered manifolds $Y'$, $Y''$, $Y'''$. Now, let's consider the conditions for grading preservation. If the surgered manifold $Y'$ is a rational homology sphere and the embedded knot $S'$ is null-homologous in $Y'-K$, then this implies the linking number between $S'$ and $K$ is trivial. Consequently, the induced map $\widehat{F}_{K}$ also preserves the Alexander grading \cite[Proposition~8.1]{MR2065507}. For a more detailed discussion of this construction and its properties, readers are referred to \cite[Chapter~14.3]{MR2114165} and \cite{MR2650809}.

\section{Legendrian Simplicity} \label{sec:main}
The aim of this section is to prove Theorem \ref{main}. Consider the Legendrian knot shown in Figure \ref{fig:legendrianskl}, where  $k$  and $l$ are odd positive integers.

\begin{figure}[ht!]
    \def\svgwidth{.9\columnwidth}
    \centering
    %
\begingroup%
  \makeatletter%
  \providecommand\color[2][]{%
    \errmessage{(Inkscape) Color is used for the text in Inkscape, but the package 'color.sty' is not loaded}%
    \renewcommand\color[2][]{}%
  }%
  \providecommand\transparent[1]{%
    \errmessage{(Inkscape) Transparency is used (non-zero) for the text in Inkscape, but the package 'transparent.sty' is not loaded}%
    \renewcommand\transparent[1]{}%
  }%
  \providecommand\rotatebox[2]{#2}%
  \newcommand*\fsize{\dimexpr\f@size pt\relax}%
  \newcommand*\lineheight[1]{\fontsize{\fsize}{#1\fsize}\selectfont}%
  \ifx\svgwidth\undefined%
    \setlength{\unitlength}{844.34378076bp}%
    \ifx\svgscale\undefined%
      \relax%
    \else%
      \setlength{\unitlength}{\unitlength * \real{\svgscale}}%
    \fi%
  \else%
    \setlength{\unitlength}{\svgwidth}%
  \fi%
  \global\let\svgwidth\undefined%
  \global\let\svgscale\undefined%
  \makeatother%
  \begin{picture}(1,0.69397782)%
    \lineheight{1}%
    \setlength\tabcolsep{0pt}%
    \put(0,0){\includegraphics[width=\unitlength,page=1]{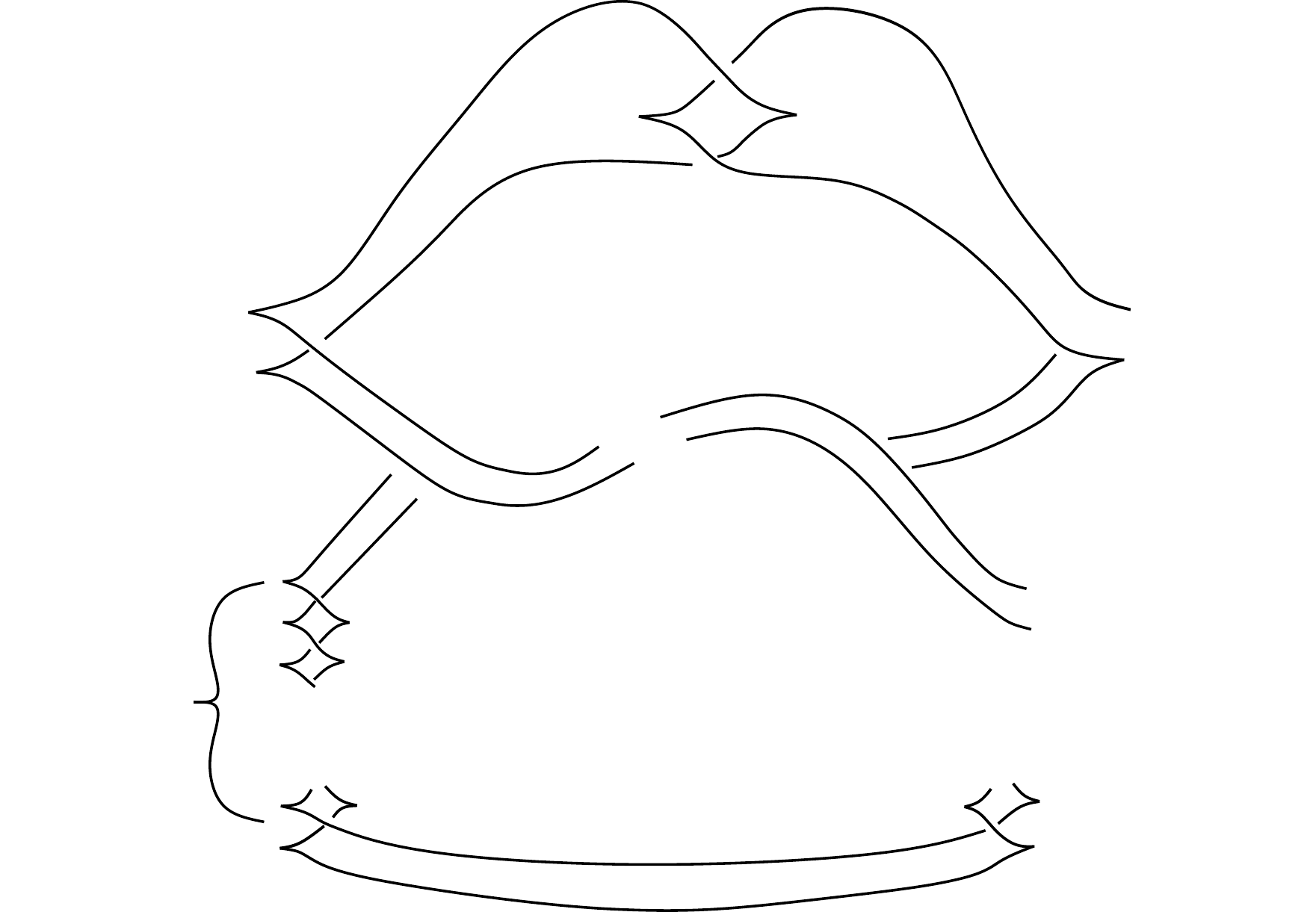}}%
    \put(0.00002748,0.15716945){\color[rgb]{0,0,0}\makebox(0,0)[lt]{\lineheight{1.25}\smash{\begin{tabular}[t]{l}$k$ crossings \end{tabular}}}}%
    \put(0,0){\includegraphics[width=\unitlength,page=2]{legendrianskl.pdf}}%
    \put(0.87912876,0.15950447){\color[rgb]{0,0,0}\makebox(0,0)[lt]{\lineheight{1.25}\smash{\begin{tabular}[t]{l}$l$ crossings \end{tabular}}}}%
    \put(0,0){\includegraphics[width=\unitlength,page=3]{legendrianskl.pdf}}%
  \end{picture}%
\endgroup%

    \caption{The Legendrian knot $S(k,l)$ in the standard $3-$sphere  $S^3$.}
    \label{fig:legendrianskl}
\end{figure}
\begin{proposition} \label{zero}
      The Legendrian knot  $S(k,l)$ in Figure \ref{fig:legendrianskl} has $tb(S(k,l))=1$, $rot(S(k,l))=0$ and it is smoothly isotopic to the knot $Wh_{+}(S,-n)$ described in Figure \ref{fig:st} with $2n+4=k+l \geq 2$.
\end{proposition}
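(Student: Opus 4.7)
The proposition has three assertions to verify: the values of the Thurston--Bennequin invariant and the rotation number of $S(k,l)$, and its smooth isotopy class. The plan is to use the standard combinatorial formulas in the front projection to compute $tb$ and $rot$, and then to produce an explicit smooth isotopy between the Legendrian diagram in Figure~\ref{fig:legendrianskl} and the knot diagram in Figure~\ref{fig:st}, tracking how the parameters $k$ and $l$ correspond to the twist box of the Whitehead double.

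For the classical invariants I would use the front-projection formulas
\[
tb(L)=w(L)-\tfrac{1}{2}\#\{\text{cusps of }L\},\qquad rot(L)=\tfrac{1}{2}\bigl(\#\{\text{downward cusps}\}-\#\{\text{upward cusps}\}\bigr).
\]
The key observation is that the two bundles of $k$ and $l$ additional crossings are built from Legendrian zig-zags that contribute equally to the writhe and to half the cusp count, so their net effect on $tb$ is zero. Similarly, within each bundle the downward and upward cusps match in pairs, giving no contribution to $rot$. What remains is the fixed portion of the diagram coming from the trefoil clasp, the positive Whitehead clasp and the base cusps; counting these pieces directly should yield $tb(S(k,l))=1$ and $rot(S(k,l))=0$, independently of the specific values of $k$ and $l$. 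This $(k,l)$-independence is precisely what makes these knots candidates for detecting Legendrian non-simplicity later on.

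For the smooth isotopy claim, I would forget the contact structure and apply planar isotopies together with cusp smoothings. The trefoil clasp and Whitehead clasp visible in Figure~\ref{fig:legendrianskl} already realize the satellite pattern of the positively clasped Whitehead double of $S$, so the only thing to check is that the number of twists accumulated in the companion box matches. The $k$ and $l$ extra crossings on either side of the clasping region combine, after isotopy, to form the full-twist box of Figure~\ref{fig:st}. The relation $k+l=2n+4$ reflects the fact that this twist box contains $n+3$ negative full twists, i.e.\ $2n+6$ crossings, of which two are already implicit in the base portion of the Legendrian diagram and the remaining $2n+4$ come from the two bundles of extra crossings. The main obstacle is the careful bookkeeping of signs, orientations, and cusp-smoothings needed to guarantee that the parity condition (both $k$ and $l$ odd) is compatible with the Legendrian front and that the total count of extra crossings translates into precisely $n+3$ negative full twists; once this bookkeeping is settled, the invariant computations and the isotopy itself are routine.
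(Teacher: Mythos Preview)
Your proposal is correct and follows essentially the same approach as the paper's own proof: both use the standard front-projection formulas for $tb$ and $rot$, observe that the $k$- and $l$-zigzag bundles contribute equally to writhe and half the cusp count (hence do not affect $tb$) and that their cusps cancel in up/down pairs (hence do not affect $rot$), and then recover the smooth knot type by smoothing cusps and collecting the extra crossings into the twist box. The only difference is presentational---the paper carries out the explicit crossing and cusp tallies (writhe $=4+k+l$, cusps $=6+2k+2l$) rather than arguing structurally as you do, but the underlying method is identical.
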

\begin{proof}
In order to compute the numerical invariants, we use well-known formulas which can be found in 
\cite[Chapter~4]{MR2397738}. Specifically, the two formulas  are $tb(S(K,l))=writhe-\frac{c_{u}+c_{d}}{2}$ and $rot(S(K,l))=\frac{c_{d}-c_{u}}{2}$, where $c_{u}$ and $c_{d}$ denote the numbers of upward and downward cusps, respectively, in the Legendrian knot diagram. In Figure \ref{fig:legendrianskl}, one can easily see that the top 2 crossings have positive signs, and that there are two additional positive crossings in the upper part, one on the right and one on the left. The 12 crossings at the center of the diagram are grouped into three sets of 4, with their contributions to the writhe canceling out. At the bottom of the Figure \ref{fig:legendrianskl} on the left-hand side and  the right-hand side, there are $k$ and $l$ crossings, respectively. Each crossing is positive. In total, the writhe of the knot is $4+k+l$. \\

Now, let us compute the number of cusps in the knot diagram. Six of the cusps in the diagram are located in the upper part. Lastly, at the bottom of the diagram, there are $2k$  and $2l$ cusps on the left-hand side and right-hand side, respectively. Since strands appear in pairs with opposite directions, locally each pair consists of one upward cusp and one downward cusp. Consequently, there are $6+2k+2l$ cusps on the diagram. Therefore, we obtain the numerical invariants of $S(k,l)$ as  $tb(S(k,l))=1$ and $rot(S(k,l))=0$. Furthermore, consider the underlying smooth knot obtained by ignoring the cusps of $S(k,l)$. If one replaces all the crossings on the bottom left and right sides with a single box, this knot is smoothly isotopic to the knot shown in Figure \ref{fig:st} with $2n+6=k+l+2$. This isotopy establishes the relation between the numbers $k,l$ and $n$.
\end{proof}
\begin{proposition}
   The Legendrian invariant $\mathfrak{L}^{-}(S(k,l))$ is a nonzero element of the knot Floer homology group $HFK^{-}_{2}(S^{3}, m(Wh_{+}(S,-n)),1)$.
\end{proposition}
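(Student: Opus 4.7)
The proposition asks for two things: (i) that the bigrading of $\mathfrak{L}^{-}(S(k,l))$ is $(M,A)=(2,1)$, and (ii) that this class is nonzero in $HFK^{-}(-S^3, Wh_{+}(S,-n))$. I would handle these separately.

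For the grading part, I would simply feed the numerical invariants computed in Proposition \ref{zero} into Theorem \ref{thm:main}. Since $tb(S(k,l))=1$ and $rot(S(k,l))=0$, the Alexander grading formula gives
\[
2A_{F}(\mathfrak{L}^{-}(S(k,l))) = tb(S(k,l)) - rot_{F}(S(k,l)) + 1 = 2,
\]
so $A_{F}=1$. The standard contact structure on $S^3$ has torsion (in fact trivial) $c_1(\xi_{st})$ and $d_{3}(\xi_{st})=0$, so the Maslov grading formula yields $M(\mathfrak{L}^{-}(S(k,l))) = 2A_{F} - d_{3}(\xi_{st}) = 2$. This places the class in $HFK^{-}_{2}(S^{3}, m(Wh_{+}(S,-n)),1)$ as required. (Recall that our convention writes the knot Floer homology of the mirror, so $-S^3 = S^3$ with the opposite orientation corresponds to $m(Wh_{+}(S,-n))$ in the notation of the statement.)

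For nonvanishing, the plan is to push $\mathfrak{L}^{-}(S(k,l))$ forward under the $U=1$ chain map of Remark \ref{rk} and use the naturality statement there: the image of $\mathfrak{L}^{-}(L)$ in $\widehat{HF}(-Y)$ is the Ozsváth–Szabó contact invariant $c(Y,\xi)$. In our situation $(Y,\xi) = (S^3,\xi_{st})$, and for the standard tight contact structure on $S^3$ the contact invariant is well known to be a generator of $\widehat{HF}(-S^3) \cong \mathbb{F}$, hence nonzero. Since a chain map whose target image is nonzero must be fed a nonzero input, $\mathfrak{L}^{-}(S(k,l))$ is nonzero in $HFK^{-}(-S^3, Wh_{+}(S,-n))$.

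Conceptually, neither step should be an obstacle: the argument is essentially bookkeeping plus the nonvanishing of $c(\xi_{st})$. The only spot requiring mild care is keeping the mirror and orientation conventions consistent so that the grading/nonvanishing assertions land in the group named in the statement; once the conventions are aligned with those of Theorem \ref{thm:main} and Remark \ref{rk}, the two ingredients combine directly to give the claim.
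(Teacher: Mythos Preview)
Your proof is correct and follows essentially the same approach as the paper: the gradings are read off from Theorem~\ref{thm:main} using $tb=1$, $rot=0$, and $d_3(\xi_{st})=0$, while nonvanishing is deduced from the fact that the Legendrian invariant maps to the contact invariant $c(S^3,\xi_{st})\neq 0$ (the paper phrases this via Stein fillability and \cite[Theorem~1.2]{MR2557137}, you via Remark~\ref{rk}, but these are the same mechanism).
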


Note that for any knot $K$ in a $3-$manifold $Y$, one can consider the pair $(-Y, K)$, which is obtained by reversing the orientation of the ambient space $Y$ while keeping the knot $K$ as the same submanifold. When the ambient manifold is $Y = S^{3}$, there exists an orientation-preserving diffeomorphism between the pair $(-S^{3}, K)$ and $(S^{3}, m(K))$, where $m(K)$ denotes the mirror image of the knot $K$.
\begin{proof}
  The contact invariant of a Stein fillable contact 
  $3–$manifold  is non-zero  \cite[Theorem~1.5]{MR2153455}, see also \cite[Corollary~8.2.2]{MR2114165}. By applying \cite [Theorem~1.2]{MR2557137}, we conclude that the Legendrian invariant $\mathfrak{L}^{-}(S(k,l))$ is also non–zero. The Alexander and Maslov gradings of $\mathfrak{L}^{-}(S(k,l))$ are derived from the  formulas as specified  in Theorem \ref{thm:main}. 
\end{proof}

\begin{corollary} \label{minus}
The Legendrian invariant  $\widehat{\mathfrak{L}}(S(k,l))$ is a nonzero element of the knot Floer homology group $\widehat{HFK}_{2}(S^{3}, m(Wh_{+}(S,-n)),1)$.
\end{corollary}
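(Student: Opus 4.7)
The plan is to derive the corollary directly from the preceding proposition by pushing the class $\mathfrak{L}^{-}(S(k,l))$ through the natural map
\[
HFK^{-}(-S^{3},S(k,l))\longrightarrow\widehat{HFK}(-S^{3},S(k,l))
\]
from Remark \ref{rk1} and then reading off the bigrading from Theorem \ref{thm:main}.

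Since this map preserves the Alexander grading and carries $\mathfrak{L}^{-}$ to $\widehat{\mathfrak{L}}$, combining Theorem \ref{thm:main} with the computed values $tb(S(k,l))=1$, $rot(S(k,l))=0$, and $d_{3}(\xi_{st})=0$ from Proposition \ref{zero} places the image in bigrading $(M,A)=(2,1)$, namely in $\widehat{HFK}_{2}(S^{3},m(Wh_{+}(S,-n)),1)$, so the grading part of the statement is immediate once nonvanishing is established.

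The main obstacle is checking that the image does not vanish, since the kernel of the $U=0$ quotient map is exactly $U\cdot HFK^{-}$. I plan to handle this via a Seifert genus input: every Whitehead double has Seifert genus equal to $1$, and mirroring preserves genus, so $m(Wh_{+}(S,-n))$ has genus $1$. The genus-detection property of knot Floer homology then forces $HFK^{-}(-S^{3},S(k,l),s)=0$ for every $s\geq 2$. Because the $U$-action decreases Alexander grading by one, its image at Alexander grading $1$ must be trivial, so the class $\mathfrak{L}^{-}(S(k,l))$, which is supported in that grading by the previous proposition, cannot be $U$-divisible. Therefore its image $\widehat{\mathfrak{L}}(S(k,l))$ is a nonzero element of $\widehat{HFK}_{2}(S^{3},m(Wh_{+}(S,-n)),1)$, as required.
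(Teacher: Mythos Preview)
Your proof is correct and follows essentially the same approach as the paper: both arguments use the map of Remark~\ref{rk1}, identify the bigrading $(M,A)=(2,1)$ via Theorem~\ref{thm:main}, and then verify nonvanishing by invoking Seifert genus one together with genus detection in knot Floer homology to see that nothing in the top Alexander grading lies in $U\cdot HFK^{-}$. The only cosmetic difference is that the paper cites \cite{MR1022995} and \cite{MR2023281} explicitly for the genus facts.
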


\begin{proof}
The Seifert genus of the Whitehead double $Wh_{+}(S,-n)$  is equal to $1$ (see  \cite[Corollary~3.2]{MR1022995}). The Seifert genus of a knot is equal to the maximum Alexander grading of its knot Floer homologies $HFK^{-}$ and $\widehat{HFK}$ \cite[Theorem~1.2]{MR2023281}. 
Therefore, the maximum Alexander grading is $1$ for the knot $Wh_{+}(S,-n)$. This also holds for its mirror, $m(Wh_{+}(S,-n))$ since the Seifert genus is unchanged by mirroring. \\

Now, consider the Legendrian invariant $\widehat{\mathfrak{L}}$ associated with the  knot $Wh_{+}(S,-n)$. This invariant is an element of $\widehat{HFK}_{2}(S^{3}, m(Wh_{+}(S,-n)),1)$. By Theorem \ref{thm:main}, the Legendrian invariant $\mathfrak{L}$ resides in bidegree with Alexander grading $A=1$, Maslov grading $M=2$.
 Since the kernel of the specialization map $HFK^{-}(S^{3}, m(Wh_{+}(S,-n))) \longrightarrow \widehat{HFK}(S^{3}, m(Wh_{+}(S,-n)))$ is $U \cdot HFK^{-}(m(Wh_{+}(S,-n)))$, and $\mathfrak{L}$ resides in the maximal Alexander grading, it does not contain any term with $U$. Thus, $\mathfrak{L}$ is not in the kernel of this specialization map. Therefore, the image of the Legendrian invariant $\mathfrak{L}^{-}(S(k,l))$ under the specialization map remains nonzero in the Alexander grading $1$. The corollary follows directly from Remark \ref{rk1}.  
 \end{proof}

\color{black}
\begin{figure}[ht!]
    \def\svgwidth{\columnwidth}
    \centering
    %
\begingroup%
  \makeatletter%
  \providecommand\color[2][]{%
    \errmessage{(Inkscape) Color is used for the text in Inkscape, but the package 'color.sty' is not loaded}%
    \renewcommand\color[2][]{}%
  }%
  \providecommand\transparent[1]{%
    \errmessage{(Inkscape) Transparency is used (non-zero) for the text in Inkscape, but the package 'transparent.sty' is not loaded}%
    \renewcommand\transparent[1]{}%
  }%
  \providecommand\rotatebox[2]{#2}%
  \newcommand*\fsize{\dimexpr\f@size pt\relax}%
  \newcommand*\lineheight[1]{\fontsize{\fsize}{#1\fsize}\selectfont}%
  \ifx\svgwidth\undefined%
    \setlength{\unitlength}{816.45970142bp}%
    \ifx\svgscale\undefined%
      \relax%
    \else%
      \setlength{\unitlength}{\unitlength * \real{\svgscale}}%
    \fi%
  \else%
    \setlength{\unitlength}{\svgwidth}%
  \fi%
  \global\let\svgwidth\undefined%
  \global\let\svgscale\undefined%
  \makeatother%
  \begin{picture}(1,0.69177869)%
    \lineheight{1}%
    \setlength\tabcolsep{0pt}%
    \put(0,0){\includegraphics[width=\unitlength,page=1]{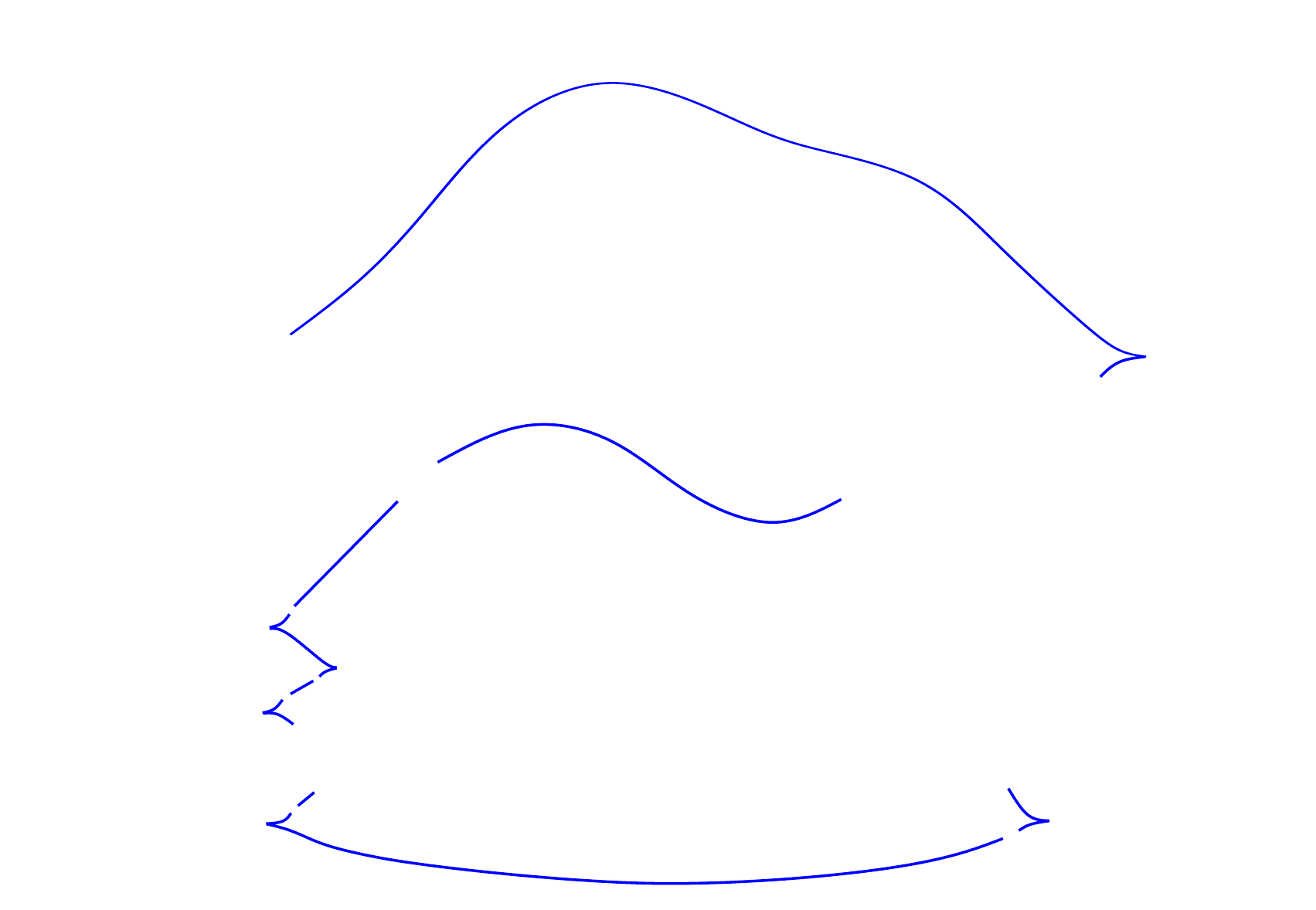}}%
    \put(0.87957267,0.1371868){\color[rgb]{0,0,0}\makebox(0,0)[lt]{\lineheight{1.25}\smash{\begin{tabular}[t]{l}$3l$ crossings\end{tabular}}}}%
    \put(-0.00003014,0.13966081){\color[rgb]{0,0,0}\makebox(0,0)[lt]{\lineheight{1.25}\smash{\begin{tabular}[t]{l}$3k$ crossings\end{tabular}}}}%
    \put(0.66791244,0.58090931){\color[rgb]{0,0,1}\makebox(0,0)[lt]{\lineheight{1.25}\smash{\begin{tabular}[t]{l}$-1$\end{tabular}}}}%
    \put(0,0){\includegraphics[width=\unitlength,page=2]{legendrianwith2h.pdf}}%
  \end{picture}%
\endgroup%

    \caption{The Legendrian knot $S(k,l)$ and the Legendrian trefoil $C(k,l)$ ({blue}) with  contact framing $(-1)$.}
    \label{fig:legendrianwith2h}
\end{figure}

\begin{figure}[ht!]
   \def\svgwidth{.9\columnwidth}
    \centering
    %
\begingroup%
  \makeatletter%
  \providecommand\color[2][]{%
    \errmessage{(Inkscape) Color is used for the text in Inkscape, but the package 'color.sty' is not loaded}%
    \renewcommand\color[2][]{}%
  }%
  \providecommand\transparent[1]{%
    \errmessage{(Inkscape) Transparency is used (non-zero) for the text in Inkscape, but the package 'transparent.sty' is not loaded}%
    \renewcommand\transparent[1]{}%
  }%
  \providecommand\rotatebox[2]{#2}%
  \newcommand*\fsize{\dimexpr\f@size pt\relax}%
  \newcommand*\lineheight[1]{\fontsize{\fsize}{#1\fsize}\selectfont}%
  \ifx\svgwidth\undefined%
    \setlength{\unitlength}{992.12581124bp}%
    \ifx\svgscale\undefined%
      \relax%
    \else%
      \setlength{\unitlength}{\unitlength * \real{\svgscale}}%
    \fi%
  \else%
    \setlength{\unitlength}{\svgwidth}%
  \fi%
  \global\let\svgwidth\undefined%
  \global\let\svgscale\undefined%
  \makeatother%
  \begin{picture}(1,0.59142863)%
    \lineheight{1}%
    \setlength\tabcolsep{0pt}%
    \put(0,0){\includegraphics[width=\unitlength,page=1]{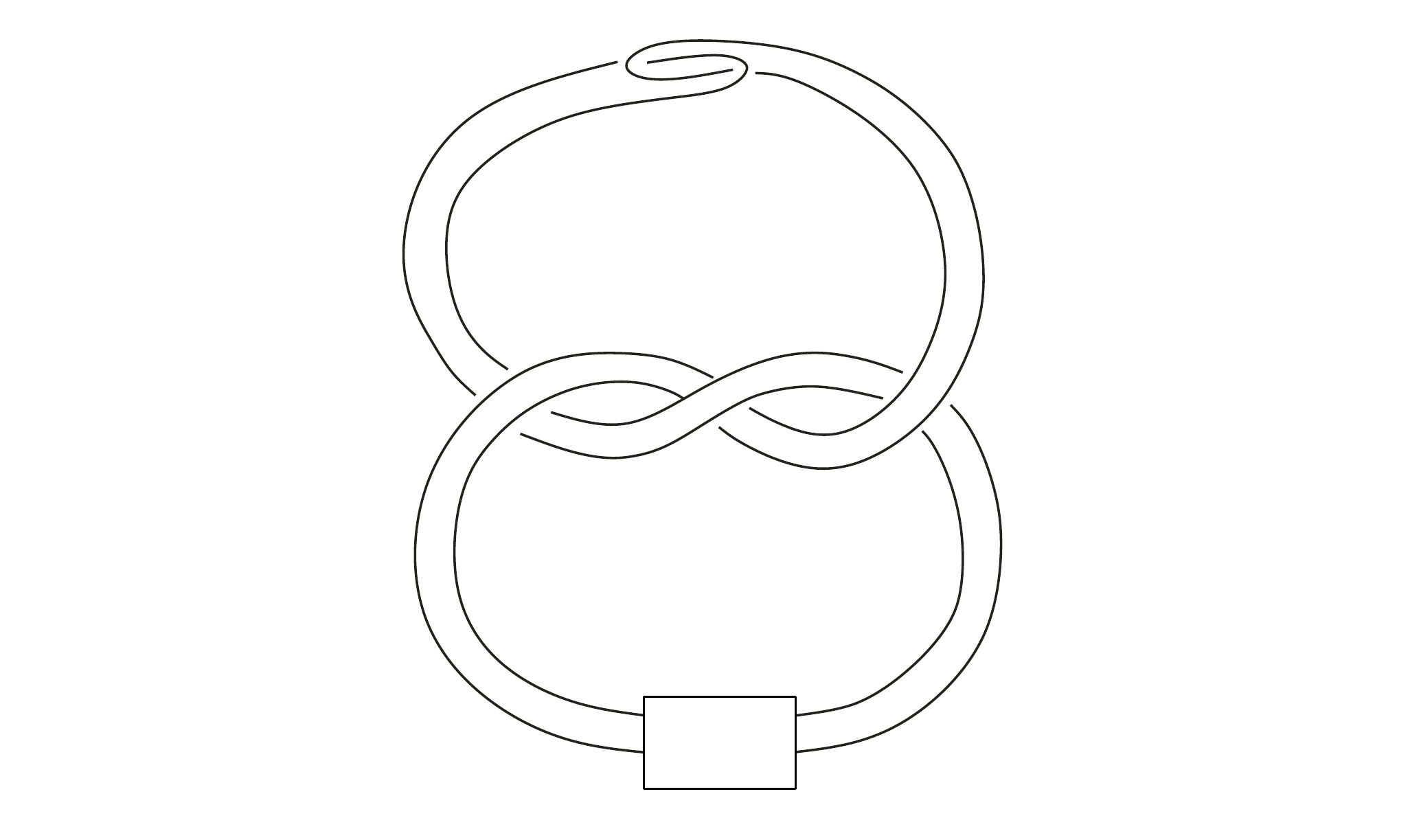}}%
    \put(0.46440529,0.06033187){\color[rgb]{0,0,0}\makebox(0,0)[lt]{\lineheight{1.25}\smash{\begin{tabular}[t]{l}$n+3$\end{tabular}}}}%
    \put(0,0){\includegraphics[width=\unitlength,page=2]{smthwith2handle.pdf}}%
    \put(0.65624327,0.54477655){\color[rgb]{0,0,1}\makebox(0,0)[lt]{\lineheight{1.25}\smash{\begin{tabular}[t]{l}$n+1$\end{tabular}}}}%
  \end{picture}%
\endgroup%

   \caption{ The mirror of the smooth knot $Wh_{+}(S,-n)$ and the left handed trefoil (blue) with Seifert framing $(n+1)$.}
    \label{fig:smthwith2handle}
\end{figure}

 Consider the $2–$component Legendrian link of  Figure \ref{fig:legendrianwith2h} which lies in $(S^{3}, \xi_{st})$.  Contact $(-1)-$surgery on $C(k,l)$ yields a contact manifold denoted $(-Y', \xi(k,l))$. The black-colored component of the link, namely $S(k,l)$, induces a Legendrian knot $S'(k,l)$ in $(-Y', \xi(k,l))$. Take the push-off of the Legendrian trefoil $C(k,l) \subset S^{3}$,  its image under this contact surgery gives  another natural Legendrian knot denoted by $C'(k,l)$ in $(-Y', \xi(k,l))$. Then we apply contact $(+1)-$surgery on $C'(k,l)$ in $(-Y', \xi(k,l))$ which according to the Proposition \ref{cancellation} cancels the first contact $(-1)-$surgery.  This  provides   the standard contact 
 $3–$sphere $(S^{3}, \xi_{st})$ with the Legendrian knot $S(k,l)$.  \\
  
We aim to construct the distinguished surgery triangle of knots induced by surgery along the trefoil knot (Figure \ref{exact}). The smooth description of the contact surgery operations explained earlier is as follows: The smooth diagram underlying Figure \ref{fig:legendrianwith2h} has a mirror given in Figure \ref{fig:smthwith2handle}. Let $C$ be the Legendrian push-off of the trefoil  in Figure \ref{fig:legendrianwith2h}, which serves as the companion knot in the Whitehead double construction. One can compute the Seifert framing $sf$ from the contact framing $cf$ by the formula $sf=tb-cf$ by \cite[Definition~3.5.4]{MR2397738}. When we do the contact $(-1)-$surgery, the Seifert framing of the smooth surgery knot is $(-(n+1))$. We now reverse the orientation of the ambient manifold $-Y'$. Then, the smooth manifold $Y'$ is obtained from $S^{3}$ by $(n+1)$ surgery on the mirror of $C$ i.e. $m(C)$.  Thus, the first term of the distinguished triangle  is represented by the smooth surgery diagram, a blue-colored knot depicted in the top left-hand side of Figure \ref{exact}.  We have the smooth knot $S'$  embedded  in the Seifert fibered space $Y^{'}$ as the black-colored knot  in the top left-hand side of Figure \ref{exact}.  $S'$ is given by the smooth knot type of $S'(k,l)$  upon changing orientation of the ambient manifold. Subsequently, the cancelling contact  $(+1)-$surgery on $C'(k,l)$  corresponds with the smooth surgery diagram at the top right-hand side of Figure  \ref{exact}. In the second term of the distinguished triangle, the manifold $Y^{''}$ has a smooth surgery diagram which consists of  the blue-colored knot with the Seifert framing $(n+1)$ and the red-colored  $0-$framed unknot. One can view the embedded black-colored knot in $S^3$ as the mirror of the smooth knot shown in Figure \ref{fig:st}, resulting from orientation reversal.  Here, the $0$-framed meridian can be canceled out with the blue-colored knot, yielding the standard diagram of $S^{3}$. Thus, $(Y'',S'')$ is exactly $(S^{3}, m(Wh_{+}(S,-n))) \cong (-S^{3}, Wh_{+}(S,-n))$. Lastly, the third term in the distinguished triangle is the manifold $Y'''$  whose smooth surgery diagram consists of the blue-colored knot with $n+1$ Seifert framing and an $1-$framed unknot in the bottom  of  Figure \ref{exact}. We have the embedded smooth knot in $Y'''$ which is black-colored knot $S'''$  induced under the surgery map.  Furthermore, the surgery  along the $0-$framed unknot $K$  between the first and the second term of the distinguished triangle induces  the following map: 
\begin{equation} 
   \widehat{F}_{K}: \widehat{HFK}(-Y^{'}, S') \longrightarrow \widehat{HFK}(-S^{3}, Wh_{+}(S,-n)) 
   \centering
    \label{iso}
\end{equation}

\begin{figure}[ht!]
    \def\svgwidth{\columnwidth}
    \centering
    \import{./images/}{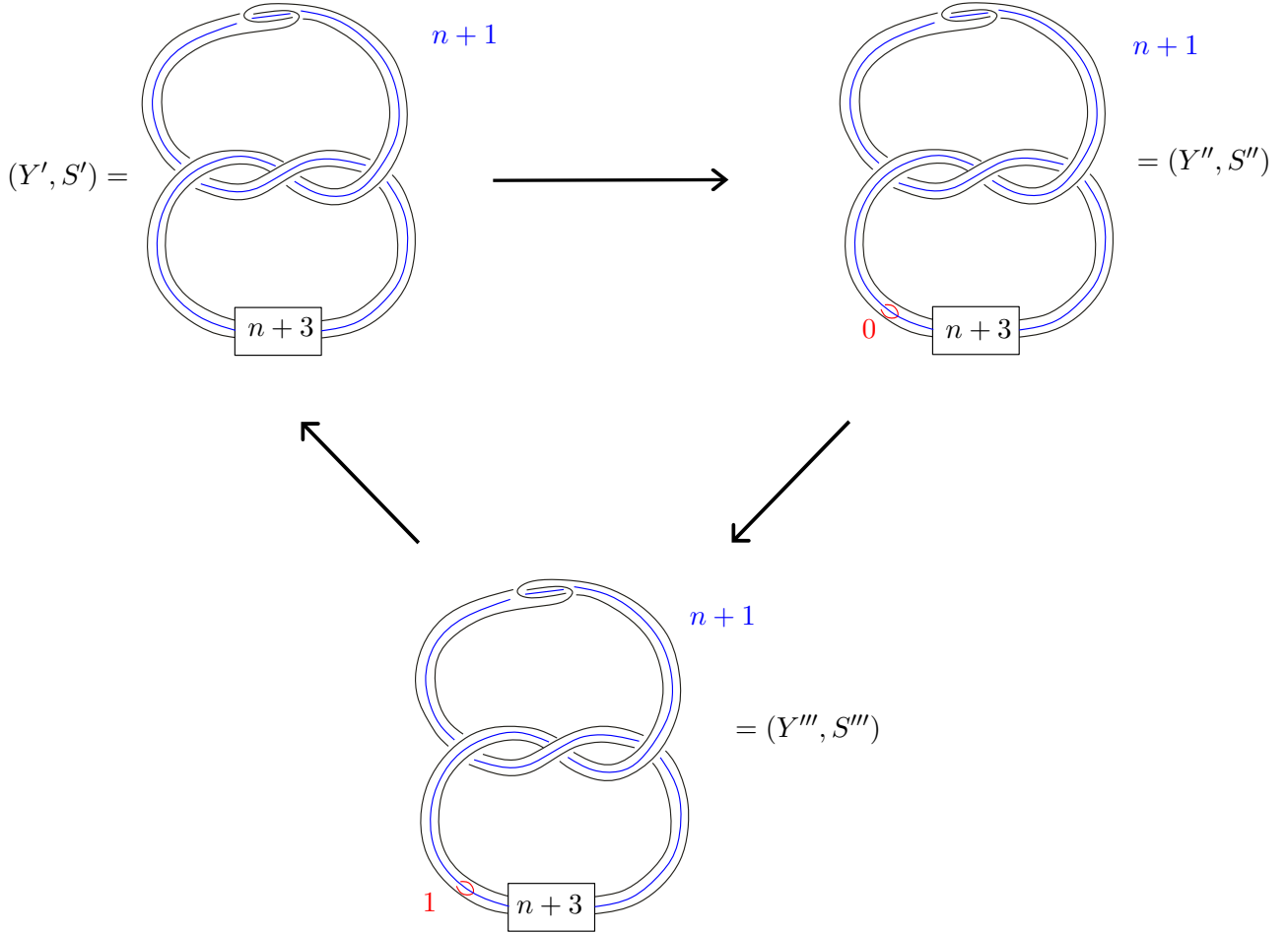}

    \caption{The distinguished triangle of knots induced by the surgery along the trefoil.}
    \label{exact}
\end{figure}
\color{black}
\begin{lemma} \label{cal}
  The knot $S'''$ is an unknot in the Seifert fibered manifold  $Y^{'''}$  in Figure \ref{exact}.
\end{lemma}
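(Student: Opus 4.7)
The plan is to apply Kirby calculus to the surgery diagram of $(Y''',S''')$ given in Figure \ref{exact} and simplify it until $S'''$ visibly bounds an embedded disk. The diagram presents $Y'''$ as surgery on $S^{3}$ along two components, namely the blue left-handed trefoil carrying Seifert framing $n+1$ together with a $(+1)$-framed unknot, while the black knot $S'''$ is embedded in the complement of these surgery components.

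The main step is to blow down the $(+1)$-framed unknot. Locally, this operation has a well-known effect: any strand passing through its spanning disk acquires a single negative full twist, and the framings of any linked components change by $-(\text{lk})^{2}$. In our setting, the $(+1)$-framed unknot is the meridian $\mu_{K'}$ that appears in the construction of the distinguished triangle. Tracing it back through the cancellation that produced the second term $(Y'',S'')\cong(-S^{3},\,Wh_{+}(S,-n))$, it encircles precisely the two clasping strands of the Whitehead double pattern realizing $S'''$. The negative full twist introduced by the blow down therefore cancels the positive clasp and leaves two parallel untwisted strands in its place.

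Once the clasp has been undone, $S'''$ is no longer a Whitehead-type satellite of the blue trefoil. After absorbing the twist into the $(-n-3)$-twist box and performing a short isotopy, $S'''$ becomes isotopic to a meridian of the solid torus neighborhood of the blue trefoil. A meridian of a surgered knot is precisely the belt circle of the attached $2$-handle and hence bounds the core disk of that $2$-handle in the surgered $3$-manifold. This shows that $S'''$ is an unknot in $Y'''$, and the Seifert-fibered structure on $Y'''$ is inherited from the surgery on the trefoil (whose framing is adjusted by the blow down).

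The main obstacle is careful diagrammatic bookkeeping: one must verify that the $(+1)$-framed unknot indeed encircles exactly the two clasping strands, that the blow down introduces one negative full twist with the sign needed to cancel the positive clasp, and that the resulting picture genuinely simplifies to a meridian of the trefoil rather than to a more complicated curve. These are routine Kirby-move checks carried out by drawing the two or three intermediate diagrams; once the local simplification is identified, the conclusion follows by inspection.
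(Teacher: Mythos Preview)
Your proposal contains a genuine gap in the identification of where the $(+1)$-framed unknot sits in the diagram for $Y'''$.  In the $\infty/0/1$ description of the triangle, $Y'''$ is presented as surgery on the blue trefoil with framing $n+1$ together with the curve $\tilde K$ carrying framing $1$; this $\tilde K$ is a \emph{meridian of the blue companion trefoil}, chosen so that with framing $0$ it cancels the trefoil and yields $Y''\cong S^{3}$.  In particular it links only the single blue strand and does \emph{not} encircle the two clasping strands of the black Whitehead pattern (if it did, the $0$-framed cancellation producing $Y''$ would already alter $S''$, contradicting $(Y'',S'')\cong(-S^{3},Wh_{+}(S,-n))$).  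Consequently, blowing down the $(+1)$-framed unknot simply changes the framing of the blue trefoil from $n+1$ to $n$ and leaves the black knot untouched: after the blow down $S'''$ is still the full (mirror of the) Whitehead double, now living in $n$-surgery on the left-handed trefoil.  Your claimed unclasping does not occur, and the subsequent assertion that $S'''$ becomes a meridian of the trefoil is unjustified.

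What is actually needed---and what the paper does---is a pair of \emph{handle slides} of the black knot over the blue $2$-handle after the blow down.  One takes two push-offs of the blue trefoil with respect to its Seifert framing $n$ and bands them to the two parallel strands of the Whitehead pattern; these slides strip the pattern off the companion.  Only after these two slides and a further isotopy does the diagram separate into an unknot (the image of $S'''$) and the blue surgery trefoil.  The blow down by itself is just a framing adjustment and cannot replace this step.
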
 

\begin{proof}
Initially, let us  blow-down   the unknot with framing $1$. The Seifert framing of the surgery trefoil changes to $n$. Subsequently, we take two push-offs of the 2-handle. We do two handle slides on the black-colored knot  along the 2-handle. In  Figure \ref{isotopy}(a), these two push-offs of the 2-handle are depicted as green and pink curves. These push-offs respect  the Seifert framing of the 2-handle and thus they link with the surgery knot $n$ times.  Then, we add bands to complete the handle slides as shown in Figure \ref{isotopy}(b). \\

After attaching the bands, one can simplify the diagram through a series of isotopies starting with the picture shown in Figure \ref{isotopy}(c). Through these isotopies, the diagram can be separated into an unknot and a blue-colored surgery knot with framing $n$, representing the surgery diagram of $Y^{'''}$.
\end{proof}

\begin{figure}[ht!]
    \def\svgwidth{\columnwidth}
    \centering
    \import{./images/}{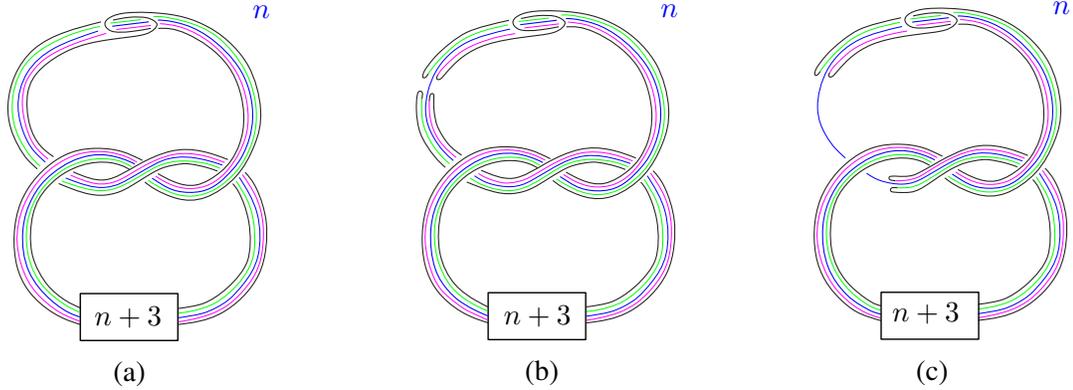}

    \caption{(a)  The    green  and  pink  curves are two  push-offs of the  blue    colored surgery knot. (b) The band attachments for performing handle slides are shown.   (c) Starting the isotopy of the  knot is depicted.}
    \label{isotopy}
\end{figure}

\begin{corollary} \label{yok}
The knot Floer homology group   $\widehat{HFK}_{2}(Y^{'''},S''',1)$ vanishes.

\end{corollary}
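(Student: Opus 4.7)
The plan is short and is essentially a direct consequence of Lemma \ref{cal}. I would first invoke Lemma \ref{cal} to view $S''' \subset Y'''$ as an unknot; in particular $S'''$ bounds an embedded disk in $Y'''$, so it is null-homologous and has Seifert genus $g(S''') = 0$.

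The rest is a one-line appeal to the genus detection theorem for knot Floer homology: the maximal Alexander grading in which $\widehat{HFK}(Y,K)$ is nonzero equals the Seifert genus $g(K)$ for any null-homologous knot, as established in \cite[Theorem~1.2]{MR2023281} and already invoked in the proof of Corollary \ref{minus}. Applied to $K = S'''$ with $g(S''') = 0$, this gives $\widehat{HFK}(Y''',S''',s) = 0$ for every $s \neq 0$, and in particular $\widehat{HFK}_{2}(Y''',S''',1) = 0$, which is the desired conclusion.

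A self-contained variant, should one prefer not to quote genus detection, is to cap off the disk bounded by $S'''$ to an embedded $2$-sphere $\widehat{D} \subset Y'''_{0}(S''')$ and apply the adjunction inequality in Heegaard Floer homology: any relative $\mathrm{Spin}^{c}$ structure $\mathfrak{t}$ with $\widehat{HF}(Y'''_{0}(S'''),\mathfrak{t}) \neq 0$ must pair trivially with $[\widehat{D}]$, and via the Alexander grading formula $A_{F}(x) = \tfrac{1}{2}\langle c_{1}(\mathfrak{t}(x)), [\widehat{F}] \rangle$ recalled in Section 2, this forces the Alexander grading to vanish on every chain-level generator. Either route yields the same conclusion in one step, so I do not expect a real obstacle here: the genuinely nontrivial content is Lemma \ref{cal} itself, and the present corollary is a formal consequence once $S'''$ is identified as an unknot.
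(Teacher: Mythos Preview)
Your proposal is correct and follows essentially the same route as the paper: invoke Lemma~\ref{cal} to identify $S'''$ as an unknot in $Y'''$, and then use that the knot Floer homology of an unknot is supported entirely in Alexander grading $0$. The paper states this last fact without further justification, whereas you supply two possible reasons (genus detection or the adjunction inequality); either is fine, though for the unknot the simplest argument is the direct observation that $\widehat{HFK}(Y,U)\cong\widehat{HF}(Y)$ concentrated in Alexander grading $0$.
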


 \begin{proof} 
By Lemma \ref{cal}, we know that the knot $S'''$ is an unknot in he manifold $Y'''$. Since the knot Floer homology of an unknot is supported only in Alexander grading $0$, its homology group in Alexander grading $1$ must vanish.
Consequently, the knot Floer homology  $\widehat{HFK}(Y^{'''},S''')$ vanishes at Alexander grading $1$. 
\end{proof}

Consider $Y^{'}$ as the oriented boundary of a smooth $4-$manifold $X$, built from a unique $0-$handle and a unique $2-$handle whose attaching sphere is the smooth knot $m(C) \subset S^{3}$ with  framing $n+1$. One can construct a closed surface $F$ from a Seifert surface   of the surgey knot in the $0-$handle  by capping it off with the core of the $2-$handle. 
 The cohomology class $[F]$ of the surface $F$ is a generator of $H^{2}(X;\mathbb{Z)} \cong \mathbb{Z}$.  The set of $Spin^{c}$ structures on $X$  forms an affine space over  $H^{2}(X;\mathbb{Z})$.  We label the $Spin^c$ structures on $X$ as $\mathfrak{s}^{X}_{j}$ for $j \in \mathbb{Z}$, uniquely determined by the condition:
$\langle c_{1}(\mathfrak{s}^{X}_{j}), [F] \rangle = n+1+2j$. Similarly, let $-X$ denote the manifold with reversed orientation, corresponding to surgery with framing $-n-1$. We label its $Spin^c$ structures $\mathfrak{s}^{-X}_{j}$ such that: $\langle c_{1}(\mathfrak{s}^{-X}_{j}), [F] \rangle = -n-1+2j$. Under the change of orientation, the labeling corresponds as $\mathfrak{s}^{X}_{j} \mapsto \mathfrak{s}^{-X}_{j+(n+1)}$. The restriction map $i^{\ast} : Spin^{c}(\pm X) \to Spin^{c}(Y')$ is surjective. Since there are $n+1$ distinct $Spin^c$ structures on the boundary $Y'$, denoted $\mathfrak{t}_{0}, \ldots, \mathfrak{t}_{n}$, the restriction is given by modulo arithmetic: $i^{\ast}(\mathfrak{s}^{\pm X}_{j}) = \mathfrak{t}_{[j]_{n+1}}$.\\

We now relate this labeling to the contact geometry. A Legendrian representative $C(k,l)$ of the knot $C$ induces an almost-complex structure $J_{k,l}$ on $-X$. This structure is supported by a specific $Spin^{c}$ structure $\mathfrak{s}^{-X}_{j}$ on $-X$ satisfying:
\[ \langle c_{1}(\mathfrak{s}^{-X}_{j}), [F] \rangle = \text{rot}(C(k,l)). \]
Combining this with the labeling definition for $-X$, we obtain the relation $\text{rot}(C(k, l)) = -n-1+2j$. Furthermore, the complex tangencies of $J_{k,l}$ on the boundary $\partial(-X) = -Y'$ induce the contact structure $\xi(k, l)$. Consequently, the contact structure $\xi(k, l)$ is supported by the boundary $Spin^{c}$ structure $\mathfrak{t}_{[j]_{n+1}}$. The restrictions of two $Spin^{c}$ structures $\mathfrak{s}_{j}$ and $\mathfrak{s}_{m}$ to the boundary $Y^{'}$  are equivalent if and only if $j \equiv m$ $(mod \hspace{.2cm} n+1)$ \cite[Chapter~6]{MR2114165}.

\begin{proposition} \label{long}
   Let $k$ and $l$ be odd integers satisfying $k \leq l$ and $k+l=2n+4$. The map $\widehat{F}_{K}$  given in Equation \ref{iso}  is injective on the subgroup of the knot Floer homology with Alexander grading $1$, and the Legendrian invariants $\widehat{\mathfrak{L}}(S'(k,l))$ are all non-zero and distinct.
\end{proposition}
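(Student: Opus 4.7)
The plan is to combine the distinguished surgery exact triangle of Figure \ref{triangle} with the $\text{Spin}^{c}$ decomposition of $\widehat{HFK}(-Y',S')$. The strategy has three steps: first, establish injectivity of $\widehat{F}_{K}$ in Alexander grading $1$; second, deduce that each $\widehat{\mathfrak{L}}(S'(k,l))$ is nonzero; third, identify the invariants as lying in pairwise distinct $\text{Spin}^{c}$ summands, so that they are distinct elements.

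For injectivity, I would restrict the exact triangle to Alexander grading $1$, using that the three induced maps preserve this grading. The term of the triangle feeding into $\widehat{HFK}(-Y',S',1)$ is (up to the orientation reversal, which does not affect vanishing of the grading-$1$ piece for an unknot) the Alexander-grading-$1$ part of $\widehat{HFK}$ of $(Y''',S''')$. By Lemma \ref{cal}, $S'''$ is the unknot in $Y'''$, and since the knot Floer homology of an unknot is concentrated in Alexander grading $0$, Corollary \ref{yok} tells us this incoming term vanishes. Exactness then forces $\widehat{F}_{K}|_{A=1}$ to be injective. Non-vanishing of each $\widehat{\mathfrak{L}}(S'(k,l))$ follows immediately: $\widehat{F}_{K}$ sends it to $\widehat{\mathfrak{L}}(S(k,l))$, which is nonzero and sits in Alexander grading $1$ by Corollary \ref{minus}, so injectivity rules out $\widehat{\mathfrak{L}}(S'(k,l))=0$.

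For distinctness I would invoke the $\text{Spin}^{c}$ decomposition together with the labeling established in the paragraphs preceding the proposition. There, the contact structure $\xi(k,l)$ on $-Y'$ is supported in $\mathfrak{t}_{[j]_{n+1}}$ with $j=(rot(C(k,l))+n+1)/2$, hence $\widehat{\mathfrak{L}}(S'(k,l))$ lies in that summand of $\widehat{HFK}(-Y',S')$. A direct cusp count on the Legendrian push-off $C(k,l)$ in Figure \ref{fig:legendrianwith2h} expresses $rot(C(k,l))$ as an explicit linear function of $l-k$, and the restriction $1\le k\le l$ with $k+l=2n+4$ ensures that the residues $[j]_{n+1}$ are pairwise distinct across the $\lfloor (n+3)/2\rfloor$ admissible pairs. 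Consequently the nonzero invariants $\widehat{\mathfrak{L}}(S'(k,l))$ occupy pairwise distinct $\text{Spin}^{c}$ summands of $\widehat{HFK}(-Y',S',1)$ and are therefore distinct.

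The main obstacle is this last cusp-count verification: one must correctly read off $rot(C(k,l))$ from the Legendrian diagram and then confirm the modular arithmetic, namely that $[(rot(C(k,l))+n+1)/2]_{n+1}$ takes $\lfloor(n+3)/2\rfloor$ distinct values as $(k,l)$ ranges over the admissible pairs. The asymmetry hypothesis $k\le l$ is exactly what prevents rotation numbers of opposite sign from collapsing to the same boundary $\text{Spin}^{c}$ structure; without it the pairs $(k,l)$ and $(l,k)$ would land in the same summand and the argument would fail.
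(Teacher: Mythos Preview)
Your proposal is correct and follows essentially the same route as the paper: injectivity from Corollary~\ref{yok} plus exactness, and distinctness via the $\text{Spin}^c$ decomposition together with the rotation-number computation $rot(C(k,l))=\tfrac{k-l}{2}$. The only noteworthy difference is your non-vanishing step---you deduce $\widehat{\mathfrak{L}}(S'(k,l))\neq 0$ from the nonzero image $\widehat{\mathfrak{L}}(S(k,l))$ under $\widehat{F}_K$, whereas the paper argues directly that $(-Y',\xi(k,l))$ is Stein fillable (being obtained by contact $(-1)$--surgery) so its contact invariant is nonzero, and then pulls this back through the $U=1$ specialization of Remark~\ref{rk}; both are valid, and the paper's route has the advantage of simultaneously pinning down the $\text{Spin}^c$ support of $\widehat{\mathfrak{L}}(S'(k,l))$ via that same specialization map, a step you take for granted in your ``hence'' clause.
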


\begin{proof}
The injectivity  of the map $\widehat{F}_{K}$ is a direct consequence of  Corollary \ref{yok} and the exactness of the distinguished triangle. 
 Let us consider two distinct Legendrian realizations of the trefoil in $(S^3, \xi_{st})$, denoted by $C(k_{1},l_{1})$ and $C(k_{2},l_{2})$ as depicted by the blue-colored knot in Figure \ref{fig:legendrianwith2h}. We choose these such that $k_{1} \neq k_{2}$ and $l_{1} \neq l_{2}$. The contact manifold  $(-Y^{'}, \xi(k,l))$  is obtained by performing contact $(-1)-$surgery on the   blue-colored Legendrian knot  $C(k,l) \subset (S^{3}, \xi_{st})$ in Figure \ref{fig:legendrianwith2h}, as discussed before Lemma \ref{cal}. The smooth surgery diagram of $-Y^{'}$ is depicted  at the top left of Figure \ref{exact}. The knot $S'(k,l)$ will denote a specific Legendrian realization of  $S'$ in  $(-Y', \xi(k,l))$, where $ \xi(k,l)$ is the contact structure induced by the surgery on  $C(k,l)$.  The choices of these distinct parameters $k$ and $l$, along with the condition $k+l=2n+4$, define distinct contact structures $\xi(k_{1},l_{1})$ and $\xi(k_{2},l_{2})$ on the manifold $-Y'$.\\

The rotation number of $C(k,l)$ can be computed using the combinatorial formula in \cite[Chapter~4]{MR2397738}. As depicted in Figure \ref{fig:legendrianwith2h}, if the top has one up cusp and one down cusp, and the bottom has $k$ down cusps and $l$ up cusps, then the rotation number of $C(k,l)$ is $\frac{k-l}{2}$. Alternatively, the rotation number of $C(k,l)$ is given by $rot(C(k,l))=\langle c_{1}(-X,J),[F] \rangle =n+1+2j$ for some $j\in \mathbb{Z}$. Since the Chern class of the almost-complex structure $J$ determines a $\text{Spin}^{c}$ structure $\mathfrak{s}_{j}$ on $-X$, the integer $j$ is  related to $\mathfrak{s}_{j}$. As $C(k_{1},l_{1})$ and $C(k_{2},l_{2})$ are chosen to have different rotation numbers (due to $k_1 \neq k_2$ and $l_1 \neq l_2$), their associated Chern classes must correspond to different $\text{Spin}^{c}$ structures on $-X$, denoted by $\mathfrak{s}_{j}$ and $\mathfrak{s}_{m}$, respectively. This yields the equalities $k_{1}-l_{1}=2n+2+4j$ and $k_{2}-l_{2}=2n+2+4m$. Then we determine their restricted $Spin^{c}$ structures on $-Y'$, given by $i^{\ast}(\mathfrak{s}_{j}) =\mathfrak{t}_{[j]_{n+1}} $ and $i^{\ast}(\mathfrak{s}_{m})=\mathfrak{t}_{[m]_{n+1}}$, where $j \equiv \frac{k_{1}-l_{1}-2n-2}{4} \pmod{n+1}$ and $ m \equiv \frac{k_{2}-l_{2}-2n-2}{4} \pmod{n+1}$.\\

In order to show that $\mathfrak{s}_{j}$ and $\mathfrak{s}_{m}$ induce different $Spin^{c}$ structures on $-Y'$, it suffices to prove that $j \not\equiv m \pmod{n+1}$. Using the relation $k+l=2n+4$ for the Legendrian knots $C(k,l)$, this condition simplifies to demonstrating that the values $\frac{k-1}{2}$ yield distinct residue classes modulo $n+1$.  By the relation, we know that there are  finitely many choices for $k$. If $n$ is even, then $k \in \{1, 3, \ldots n+3\}$.Then, we have $\frac{n+4}{2}$ many different choices for $k$. Otherwise $k \in \{1,  3, \ldots n+2\}$ and we have $\frac{n+3}{2}$ distinct choices for $k$. Indeed, for any two distinct odd  integers $k_1, k_2$ with the above conditions, we have $\frac{k_1-1}{2} \not\equiv \frac{k_2-1}{2} \pmod{n+1}$, which correspond to the different  $\text{Spin}^c$ structures. Therefore, the  different values of $k$ yield $\lfloor \frac{n+3}{2} \rfloor$ distinct $Spin^{c}$ structures on $-Y'$.\\

By the construction, the contact manifold $(-Y', \xi(k,l))$ obtained from contact $(-1)$-surgeries on Legendrian knot $C(k,l)$ are all Stein fillable. Consequently, their contact invariants $c(-Y', \xi(k,l))$ are all non-zero, as established by \cite[Corollary~8.2.2]{MR2114165} and \cite[Theorem~1.5]{MR2153455}. Furthermore, based on the preceding discussion regarding distinct restricted $\text{Spin}^c$ structures, the contact invariant $c(-Y', \xi(k,l))$ is supported by the $\text{Spin}^c$ structure $\mathfrak{t}_{[j]_{n+1}}$. Thus, for specific choices of $k$ (ranging from $1$ to $n+3$ if $n$ is even, or $1$ to $n+2$ if $n$ is odd), the $\text{Spin}^c$ structures supporting these contact invariants are all different. This demonstrates that contact $(-1)$-surgeries along distinct Legendrian realizations $C(k_1,l_1)$ and $C(k_2,l_2)$ induce different contact structures $\xi(k_1,l_1)$ and $\xi(k_2,l_2)$ on the same smooth manifold $-Y'$ \cite[Theorem~11.1.5]{MR2114165}. \\

Let $\widehat{\mathfrak{L}}(S'(k_1,l_1))$ and $\widehat{\mathfrak{L}}(S'(k_2,l_2))$ be the Legendrian invariants in $\widehat{HFK}(-Y',S')$ corresponding to the two Legendrian realizations $S'(k_{1},l_{1})$ and $S'(k_{2},l_{2})$ in $-Y'$, respectively.
The specialization map from Remark \ref{rk} relates the minus invariant $\mathfrak{L}^{-}(S'(k,l))$ to the contact invariant $c(-Y',\xi(k,l))$ when $U=1$. Specifically, $\mathfrak{L}^{-}(S'(k,l))$ specializes to $c(-Y',\xi(k,l))$. Since $c(-Y',\xi(k,l))$ is non-zero (as established in the preceding paragraph), it follows that $\mathfrak{L}^{-}(S'(k,l))$ is also non-zero. Furthermore, as $\mathfrak{L}^{-}(S'(k,l))$ is supported in a single $\text{Spin}^c$ structure of $Y'$, the specialization map implies that $\mathfrak{L}^{-}(S'(k,l))$ is supported in the same $\text{Spin}^c$ structure of $Y'$ as $c(-Y',\xi(k,l))$.\\

From the previous discussion, we know that $c(-Y',\xi(k_1,l_1))$ and $c(-Y',\xi(k_2,l_2))$ lie in different $\text{Spin}^c$ structures of $Y'$ (i.e., $\mathfrak{t}_{[j]_{n+1}}$ and $\mathfrak{t}_{[m]_{n+1}}$) when $k_1 \neq k_2$. By Corollary \ref{minus}, the invariants $\widehat{\mathfrak{L}}(S'(k,l))$ are non-zero. Since $\widehat{HFK}(-Y', S')$ decomposes over $\text{Spin}^c$ structures, the fact that these non-zero invariants belong to different $\text{Spin}^c$ structures implies that they are pairwise distinct. Thus, we conclude that all the Legendrian invariants $\widehat{\mathfrak{L}}(S'(k,l))$ are non-zero and pairwise distinct, thereby showing the existence of multiple non-Legendrian isotopic realizations of $S'$ in $-Y'$.

\end{proof}

Based on the finding of propositions above, we now proceed with the proof of the main theorem.

\begin{proof} [Proof of Theorem \ref{main}: ]
Let   us fix an integer $n \geq 1$ and consider the Whitehead double of the trefoil $Wh_{+}(S,-n)$ shown in Figure \ref{fig:st}.
We take two Legendrian realizations $S(k_{1},l_{1})$ and $S(k_{2},l_{2})$  of $Wh_{+}(S,-n)$ with  $k_{1} \neq k_{2}$, $l_{1} \neq l_{2}$. In order to prove that  $S(k_{1},l_{1})$ and $S(k_{2},l_{2})$  are not  Legendrian isotopic knots, we will show that their Legendrian invariants correspond to different elements in the knot Floer homology of $Wh_{+}(S,-n)$. Due to the naturality of Heegaard Floer homology \cite[Theorem~1.5]{MR4337438}, we have  an identification between $\widehat{HFK}(S^3, m(Wh_{+}(S,-n)))$ and $\widehat{HFK}(S^3, m(S(k,l)))$ for each Legendrian realization $S(k,l)$. \\

According to \cite[Theorem~2.3]{MR2059189} and Remark \ref{rk}, the map $\widehat{F}_{K}$ sends the Legendrian invariant in the first term of the distinguished triangle  to the Legendrian invariant of the Whitehead double of the trefoil. Since the  Legendrian invariants $\widehat{\mathfrak{L}}(S^{'}(k_{1},l_{1}))$ and $\widehat{\mathfrak{L}}(S^{'}(k_{2},l_{2}))$  are  different and  non-zero  in the subgroup $\widehat{HFK}_{2}(Y^{'},S^{'},1)$ in the first term of the distinguished exact triangle, the injectivity of    $\widehat{F}_{K}$  implies that the Legendrian invariants are distinct in $\widehat{HFK}_{2}(S^{3},m(Wh_{+}(S,-n)),1)$. Consequently, the knots  $S(k_{1},l_{1})$ and $S(k_{2},l_{2})$ define distinct, non-zero elements in the knot Floer homology of the Whitehead double of the trefoil, proving they are non-Legendrian isotopic.\\

\color{black}
In Figure \ref{fig:st}, the box represents full twists denoted by  $(-n-3)$  with the integer $n \geq 1$.  Our goal is to determine the number of distinct unordered pairs $(k, l)$ that correspond to the Legendrian realizations in Figure \ref{fig:legendrianskl}. This value is determined by the equation $2n+6=k+l+2$, where $k$ and $l$ are positive odd integers, $k \leq l$. The calculation depends on the parity of $n$. If $n$ is an odd integer, the pairs of integers range from $(1, 2n+3)$ to $(n+2, n+2)$. If $n$ is an even integer, the pairs range from $(1, 2n+3)$ to $(n+1, n+3)$.  In both cases, the number of  unordered pairs is  $\lfloor \frac{2n+6}{4} \rfloor$. These choices provide a lower bound for the number of different Legendrian realizations of $Wh_{+}(S,-n)$.  Thus, for all $n\geq 1$,  there are at least $\lfloor \frac{n+3}{2} \rfloor$ non-Legendrian isotopic realizations of $Wh_{+}(S,-n)$ in the standard contact 3-sphere $(S^3,\xi_{st})$  with Thurston-Bennequin invariant $1$ and rotation number $0$.
\end{proof}

\end{document}